\newtheorem{theorem}{Theorem}[section]
\newtheorem{lemma}[theorem]{Lemma}
\newtheorem{remark}[theorem]{Remark}
\date{}
\begin{document}

\date{}
\title{Negative eigenvalues of non-local Schr\"{o}dinger operators with sign-changing potentials }
\author{S. Molchanov \footnote{Dept of Mathematics and Statistics, UNCC, Charlotte, NC 28223,  smolchan@uncc.edu}, B. Vainberg \footnote{Dept of Mathematics and Statistics, UNCC, Charlotte, NC 28223,  brvainbe@uncc.edu} }

\maketitle

\begin{abstract}
Simon's results on the negative spectrum of recurrent Schr\"{o}dinger operators ($d=1,2$) are extended to a wider class of potentials and to non-local operators. An example of $L^1-$potental is constructed for which the essensial spectrum  of two-dimensional Schr\"{o}dinger operator covers the whole axis. Some counterexamples are provided for transient operators ($d\geq3$) showing that the assumptions on the potential for the validity of the Cwikel-Lieb-Rozenblum estimate can't be improved significantly.

\end{abstract}
MSC2020: 35J10, 35P99, 35Q99, 47A10, 35R11
\section{Introduction}
Let
\begin{equation}\label{scho}
H=-\Delta-\sigma V(x), \quad x\in \mathbb{R}^d,
\end{equation}
be a Schr\"{o}dinger operator in $L^2(\mathbb R^d)$ with a real-valued potential $V$. If $V$ decays at infinity in an appropriate sense, then $Sp_{ess}=[0,\infty)$ and the negative spectrum is discrete. If $V(x)\geq 0$, then, under some conditions on  $V(x)$ (to be discussed later), negative eigenvalues exist for arbitrary $\sigma>0$ when the dimension $d$ is $1 $ or $2$, see \cite{simon, ckmv, puri}. The situation is absolutely different in higher dimensions. If $d\geq 3$  and
\begin{equation}\label{clr}
0<\int_{x:~V(x)>0}V(x)^{d/2}dx<\infty,
\end{equation}
then there is a critical value $\sigma_{cr}>0$ such that the operator $H$ does not have negative spectrum when $\sigma\leq\sigma_{cr}$, and negative eigenvalues exist (and their number $N(\sigma V)$ is finite) when $\sigma>\sigma_{cr}$ \cite{simon}.  The latter result is based on the Cwikel-Lieb-Rozenblum (CLR) estimate \cite{cw,li,ro,clr}. The same conditions ($d\geq3$ and (\ref{clr})) are needed for the validity of quasi-classical asymptotics of  $N(\sigma V)$ as $\sigma\to \infty$, \cite{rs}.

This fundamental difference between the low dimensional ($d=1,2$) and higher dimensional ($d\geq3$) Schr\"{o}dinger operators is related to the different asymptotic behavior of the Markov semigroups $T_t=\exp(t\Delta)$ and the corresponding Markov processes (Brownian motions) $b(t)$ as $t\to\infty$.  When $d=1,2$, the Brownian motion $b(t)$ is recurrent, and it is transient if $d\geq3$. The processes $b(t)$ is recurrent or transient if the integral
\begin{equation}\label{fff}
\int_0^\infty dt\int_{\mathbb R^d\times\mathbb R^d} p(t,x,y)f(x)f(y)dxdy=\int_0^\infty dt \int_{\mathbb R^d} e^{-tk^2}|\hat{f}(k)|^2dk=\int_{\mathbb R^d} \frac{|\hat{f}(k)|^2}{|k|^2}dk
\end{equation}
diverges or converges, respectively, when $\hat {f}(0)\neq 0$. Here $f\in C_0^\infty,~p(t,x,y)$ is the integral kernel of the semigroup $T_t=\exp(t\Delta)$, i.e., the fundamental solution of the heat equation.

Two types of results are obtained in this paper. The recurrent operators are studied in the first part of the paper. A couple of examples concerning transient operators are given in the second part of the paper. In the recurrent case, our main goal is to simplify the proofs and extend the classical results \cite{simon} of Simon on the existence of negative eigenvalues to a wider class of operators. Simon proved the existence of negative eigenvalues for standard Schr\"{o}dinger operators (\ref{scho}) with arbitrarily small $\sigma>0$ when $d=1,2$ and the following conditions hold:
\begin{equation}\label{pnon}
\int_{\mathbb R^d}V(x)dx\geq 0,
\end{equation}
\begin{equation}\label{ad1}
\int_{\mathbb R}(1+|x|^2)|V(x)|dx<\infty, \quad {\rm if} \quad d=1,
\end{equation}
and there are $\varepsilon,\delta>0$ such that
\begin{equation}\label{ad2}
\int_{\mathbb R^2}(1+|x|^\varepsilon)|V(x)|dx<\infty, \quad  \int_{\mathbb R^2}|V(x)|^{1+\delta}dx<\infty, \quad {\rm if} \quad d=2.
\end{equation}

We will assume that $H=A-\sigma V(x)$, where the unperturbed operator $A$ may be non-local and has the form of multiplication by a real-valued continuous function $a(k)$ after the application of the Fourier transform:
\begin{equation}\label{opl}
\widehat{A\psi}=a(k)\widehat{\psi}(k),
\end{equation}
where $a(k)$ has the following properties: $a(k)>0$ for $k\neq 0$, and $~a(k)\leq C|k|^d , \quad k\to 0.$ Additional restrictions on $V$ and $A$ will be imposed when the integral in (\ref{pnon}) is zero.


Our conditions on $a(k)$ allow $A$ to be a generator of a symmetric homogeneous in space and time Levy process. Function $a$ in this case has the form
\[
a(k)=\int_{\mathbb \mathbb{R}^d}(1-\cos<k,z>)l(z)dz,
\]
where function $l$ (the density of the Levy measure associated with the semigroup $\exp(tL)~$) has the properties
\[
l(z)=l(-z),~\quad l(z)\geq0,~~\quad\int_{|z|>1}l(z)dz<\infty,~~\quad\int_{|z|<1}|z|l(z)dz<\infty.
\]
A typical example of such a generator $A$ is provided by a fractional power of the Laplacian $A=(-\Delta)^{\alpha/2},~0<\alpha\leq2$. We will consider this example only when $d=2,\alpha=2,$ or $d=1,~1\leq\alpha\leq2,$ to guarantee the recurrence of the process. The process is transient if $d=1,~0<\alpha<1,$ or $d=2,~0<\alpha<2$.

In general, the operator (\ref{opl}) is not Markovian, and the kernel $ p(t,x,y)$ of the corresponding semigroup $T(t)=e^{-tA}$ is not positive. However, conditions imposed on $a(k)$ imply that the integral
\[
\int_0^\infty dt\int_{\mathbb R^d\times\mathbb R^d} p(t,x,y)f(x)f(y)dxdy=\int_0^\infty dt \int_{\mathbb R^d} e^{-ta(k)}|\hat{f}(k)|^2dk=\int_{\mathbb R^d} \frac{|\hat{f}(k)|^2}{a(k)}dk
\]
diverges if $f\in C_0^\infty$, $\hat{f}(0)\neq0$. By analogy to (\ref{fff}), we will call this operators recurrent.

Our first result provides  the existence of a negative spectrum for the general recurrent operators $H=A-\sigma V(x)$, where $A$ is defined by (\ref{opl}), $V\in L^1$, and the integral of $V$ is strictly positive (conditions (\ref{ad1}), (\ref{ad2}) are not imposed). A short proof (several lines) of this result is based on a study of the quadratic form of $H$ in the Fourier space. Note that the condition $V\in L^1$ does not always guaratee the discreteness of the spectrum. Some sufficient conditions for the negative spectrum to be discrete are given in Theorem \ref{tt2}.  Condition (\ref{pnon}) will be omitted there. Thus these theorems together provide conditions for existence of the negative eigenvalues when $\int V(x)dx>0$.

When the integral in (\ref{pnon}) is zero, our proof of the existence of negative eigenvalues is more technical compared to the case when the integral is positive. We provide the proof for operator (\ref{scho}) when $d=2$ with slightly weaker conditions than in (\ref{ad2}) and for the perturbations of a fractional power of the Laplacian when $d=1$. The next result of the first part of the paper (Section 3) is the proof of the existence of negative eigenvalues for all $\sigma>0$ for $1$-dimensional operator (\ref{scho}) on the half-axis $x>0$ with the Neumann boundary condition at $x=0$ under Bargman's condition on $V$:
 \begin{equation}\label{be}
\int_{o}^\infty x|V(x)|dx<\infty.
\end{equation}
In particular, this result provides the existence of the negative eigenvalues for all $\sigma>0$ for operators on the whole line $\mathbb R$ when the potential is even. The latter case includes the class of positive-definite potentials that appear in many important practical applications,
where $V$ has the form of a convolution: $V(x)=b(x)\ast b(-x)$. Hence, $V$ is even and
\[
\int_{\mathbb R^d}V(x)dx=(\int_{\mathbb R^d}b(x)dx)^2\geq 0.
\]
Note that the Bargman condition (\ref{be}) implies (see \cite{simon}) that the negative eigenvalue is unique when $\sigma$ is small enough.

Section 4 contains an example of $L^1-$potental\ for which the esensial spectrum of two dimensional  Schr\"{o}dinger operator covers the whole axis.

Two examples are provided in the second part of the paper. They concern the transient classical Schr\"{o}dinger operator (\ref{scho}) with $d\geq3$ for which condition (\ref{clr}) is slightly violated. The first example shows that it is possible to have infinitely many negative eigenvalues when $V(x)$ decays rather fast at infinity, but the integral in  (\ref{clr}) diverges. The potential $V$ in this example has order $O(|x|^{-2}), ~|x|\to\infty,$ on a sequence of non-intersecting balls of increasing radii, and $V$ vanishes outside of the balls. Note that a logarithmically stronger estimate $|V(x)|<C(1+|x|\ln(2+|x|)^{-2}$ implies (\ref{clr}) and the boundedness of $N(\sigma V)$. The second example shows that $H$ may have no negative eigenvalues when a non-negative $V$ decays slowly at infinity.

\section{Perturbations of recurrent operators}

\begin{theorem}
Let $H=A-\sigma V(x)$ be an operator in $L^2( \mathbb{R}^d), d\geq 1,$ where operator $A$ is defined by (\ref{opl}) and function $a$ has the properties
\begin{equation}\label{akk}
a(k)>0  \quad ~{\it when} \quad k\neq 0, \quad ~a(k)\leq C|k|^d , \quad k\to 0.
\end{equation}
If  $V\in L^1(\mathbb{R}^d)$  and the integral of  $V$ is strictly positive:
$$
\int_{\mathbb{R}^d}V(x)dx> 0,
$$
then the negative spectrum of the operator $H=A-\sigma V$ with an arbitrarily small $\sigma>0$ is not empty.
\end{theorem}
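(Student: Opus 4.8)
The plan is to show that the quadratic form of $H$ takes a negative value on a suitable trial function, which by the variational principle guarantees that the negative spectrum is nonempty. Since the author promises a ``several lines'' proof working in Fourier space, I would set up the quadratic form
\[
\langle H\psi,\psi\rangle = \int_{\mathbb{R}^d} a(k)|\widehat{\psi}(k)|^2\,dk - \sigma \int_{\mathbb{R}^d} V(x)|\psi(x)|^2\,dx,
\]
and try to make the first (kinetic) term small while keeping the second (potential) term bounded below away from zero. The natural choice is a family of trial functions $\psi_\varepsilon$ whose Fourier transforms concentrate near $k=0$, so that the weight $a(k)$, which is controlled by $C|k|^d$ there, forces the kinetic energy to vanish in the limit.

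Concretely, I would take $\widehat{\psi_\varepsilon}(k)=\varphi(k/\varepsilon)$ for a fixed smooth bump $\varphi$ supported near the origin with $\varphi(0)=1$, or equivalently a rescaled and spread-out $\psi$ in physical space. The kinetic term becomes $\int a(k)|\varphi(k/\varepsilon)|^2\,dk$; after the substitution $k=\varepsilon\xi$ and using $a(\varepsilon\xi)\le C\varepsilon^d|\xi|^d$, this is bounded by $C\varepsilon^d\int |\xi|^d|\varphi(\xi)|^2\varepsilon^d\,d\xi = C\varepsilon^{2d}\int|\xi|^d|\varphi(\xi)|^2\,d\xi$, which tends to $0$ as $\varepsilon\to0$. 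For the potential term, as $\varepsilon\to0$ the function $\psi_\varepsilon(x)$ spreads out and approaches a constant; since $\widehat{\psi_\varepsilon}(0)=\int\psi_\varepsilon\,dx$ is pinned, one finds $\psi_\varepsilon(x)\to c$ pointwise (after normalization), and the potential term converges to $c^2\int_{\mathbb{R}^d}V(x)\,dx$, which is strictly positive by hypothesis. Here the condition $V\in L^1$ is exactly what is needed to pass the limit under the integral (the spreading trial function is uniformly bounded, so dominated convergence applies against the integrable $V$). Thus for small $\varepsilon$ the form is negative.

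The main obstacle, and the point requiring care, is the interplay between the two terms: the trial function must be normalized so that the kinetic energy genuinely vanishes while the potential energy stays bounded below. Because $a(k)\le C|k|^d$ is precisely the borderline (recurrent) decay rate, one cannot afford to lose even a logarithmic factor, so I would keep track of the exact scaling and possibly allow $\varphi$ itself to depend mildly on $\varepsilon$ (for instance inserting a slowly growing cutoff) to guarantee that the kinetic term is $o(1)$ against a potential term bounded away from $0$. I expect the cleanest route is to fix the normalization $\int V|\psi_\varepsilon|^2\,dx\to \int V\,dx>0$ and show the kinetic part is $o(1)$, rather than normalizing in $L^2$; since the operator has essential spectrum starting at $0$, a negative value of the form immediately places a point below the bottom of the essential spectrum and yields the desired negative eigenvalue.
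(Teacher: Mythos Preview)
Your variational strategy in Fourier space is the right one, and it is exactly what the paper does. The gap is in the choice of trial function. With your rescaled bump $\widehat{\psi}_\varepsilon(k)=\varphi(k/\varepsilon)$ one has, after the substitution $k=\varepsilon\xi$,
\[
\int a(k)\,|\varphi(k/\varepsilon)|^2\,dk \;\le\; C\,\varepsilon^{2d}\!\int|\xi|^d|\varphi(\xi)|^2\,d\xi,
\qquad
\int V(x)\,|\psi_\varepsilon(x)|^2\,dx \;=\;\varepsilon^{2d}\!\int V(x)\,|\check{\varphi}(\varepsilon x)|^2\,dx,
\]
so both terms are of order $\varepsilon^{2d}$. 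In the borderline case $a(k)\sim c|k|^d$ the kinetic term does \emph{not} become $o(\varepsilon^{2d})$, and the rescaled form tends to $c\!\int|\xi|^d|\varphi|^2-\sigma|\check{\varphi}(0)|^2\!\int V$, which is positive for small $\sigma$. Your hedge about letting $\varphi$ depend ``mildly'' on $\varepsilon$ is precisely the missing idea, and it is not a mild adjustment: one needs a genuine logarithmic gain.

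The paper obtains this gain by writing the potential term entirely on the Fourier side,
\[
\sigma\int\!\!\int \widehat{V}(k-\xi)\,\widehat{\psi}(k)\,\overline{\widehat{\psi}(\xi)}\,dk\,d\xi,
\]
and then choosing $\widehat{\psi}_\varepsilon(k)=|k|^{-d}$ on the annulus $\varepsilon<|k|<\varepsilon_1$ (and $0$ elsewhere), with $\varepsilon_1$ small enough that $\widehat{V}(k-\xi)\ge v_0>0$ there. Then the kinetic term is $\int a(k)|k|^{-2d}\,dk\le C\!\int_{\varepsilon<|k|<\varepsilon_1}|k|^{-d}\,dk\sim C_1|\ln\varepsilon|$, while the potential term is bounded below by $v_0\bigl(\int_{\varepsilon<|k|<\varepsilon_1}|k|^{-d}\,dk\bigr)^2\sim C_2|\ln\varepsilon|^2$. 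The ratio blows up like $|\ln\varepsilon|$, which beats any fixed $\sigma>0$. This specific profile, not a smooth rescaling, is what makes the argument work at the recurrent threshold $a(k)\le C|k|^d$.
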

\begin{proof}
For each $\psi\in L^2(\mathbb{R}^d)$, we have 
\begin{equation}\label{qf}
<H\psi,\psi>=\int_{\mathbb{R}^d}a(k)|\widehat{\psi}(k)|^2dk-
\sigma\int_{\mathbb{R}^d}\int_{\mathbb{R}^d}\widehat{V}(k-\xi)\widehat{\psi}(k)\overline{\widehat{\psi}(\xi)}dkd\xi:=I_1-\sigma I_2,
\end{equation}
where $\widehat{\psi}, \widehat{V}$ are the Fourier transforms of functions $\psi, V$, respectively.
One needs only to show that, for each $\sigma>0$, there is $\widehat{\psi}\in L^2(\mathbb{R}^d)$ for which the quadratic form above is positive.

Let us fix $\varepsilon_1>0$ such that $\widehat{V}(k-\xi)>v_0>0$ when $|k|,|\xi|<\varepsilon_1$ and the second estimate in (\ref{akk}) holds when $|k|\leq \varepsilon_1$. Since $\widehat{V}(0)>0$ and $\widehat{V}(k)$ is continuous, such $\varepsilon_1>0$ and $v_0$ exist. We define $\psi(x)=\psi_\varepsilon(x)$ by using its Fourier image:
\[
\widehat{\psi}_\varepsilon(k)=\left\{
                            \begin{array}{c}
                              |k|^{-d},~~ 0<\varepsilon\leq |k|\leq \varepsilon_1, \\
                              0,~~ |k|\notin(\varepsilon,\varepsilon_1),  \\
                            \end{array}  \right.
\]
where $\varepsilon\in(0,\varepsilon_1)$ will be chosen below. Then
\[
|I_1|\leq C\int_{\varepsilon<|k|<\varepsilon_1}|k|^{-d}dk<C_1|\ln\varepsilon|,  \quad \varepsilon\to0,
\]
\[
I_2>v_0\int_{\varepsilon<|k|<\varepsilon_1}\widehat{\psi}_\varepsilon(k)dk\int_{\varepsilon<|\xi|<\varepsilon_1}\widehat{\psi}_\varepsilon(\xi)d\xi=C_2v_0|\ln\varepsilon|^2>0,  \quad \varepsilon\to0.
\]
Hence the form (\ref{qf}) is negative if we choose sufficiently small $\varepsilon$ (such that $\sigma|\ln\varepsilon|\gg 1$).

\end{proof}

Some sufficient conditions for the negative spectrum of recurrent operators to be discrete are given below.
\begin{theorem}\label{tt2}
Let a) $H=A-\sigma V(x)$ be an operator in $L^2( \mathbb{R}^d), d\geq 1,$ where $A$ is defined by (\ref{opl}), function $a$ has the properties (\ref{akk}), $a(k)\to\infty$ as $|k|\to\infty$, and $|V(x)|$ is bounded and vanishes at infinity, or

b) $H=-\Delta-\sigma V(x)$ be an operator in $L^2( \mathbb{R}^2),$ where $V(x)$ satisfies the following condition:
\[
\int_{ \mathbb{R}^2} V_+(x)\ln(2+|x|)dx+\int_{ V>1}V(x)\ln V(x)dx<\infty, \quad  \quad  V_+(x)=\max (V(x), 0).
\]
Then the negative spectrum of $H$ is discrete.
\end{theorem}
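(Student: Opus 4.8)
In both cases the plan is to prove that the essential spectrum of $H$ is contained in $[0,\infty)$; discreteness of the negative spectrum is then automatic, since below the bottom of the essential spectrum the spectrum consists only of isolated eigenvalues of finite multiplicity.

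In case (a) I would invoke Weyl's theorem on the stability of the essential spectrum under relatively compact perturbations. Since $|V|$ is bounded, $\sigma V$ is a bounded self-adjoint operator and $H=A-\sigma V$ is self-adjoint on $D(A)$, so it suffices to show that $\sigma V$ is $A$-compact and to identify $Sp_{ess}(A)$. The operator $A$ acts as multiplication by $a(k)$ in the Fourier representation; since $a$ is continuous with $a(0)=0$ (forced by $a(k)\le C|k|^d$ as $k\to0$), $a(k)>0$ for $k\neq0$, and $a(k)\to\infty$, the essential range of $a$ is $[0,\infty)$, whence $Sp_{ess}(A)=[0,\infty)$. For relative compactness I would write $V(A+1)^{-1}=f(x)\,g(-i\nabla)$ with $f=V$ and $g(k)=(a(k)+1)^{-1}$, both bounded and vanishing at infinity (the latter because $a(k)\to\infty$). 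Approximating by the truncations $f_n=f\chi_{\{|x|<n\}}$ and $g_n=g\chi_{\{|k|<n\}}$, each operator $f_n(x)\,g_n(-i\nabla)$ is Hilbert--Schmidt, and $f_n(x)g_n(-i\nabla)\to f(x)g(-i\nabla)$ in operator norm because $\|f-f_n\|_\infty,\|g-g_n\|_\infty\to0$. Hence $V(A+1)^{-1}$ is compact, $\sigma V$ is $A$-compact, and $Sp_{ess}(H)=Sp_{ess}(A)=[0,\infty)$.

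In case (b) I would combine a variational reduction with the Birman--Schwinger principle. Writing $V=V_+-V_-$ with $V_\pm\ge0$, the inequality $H=-\Delta-\sigma V\ge-\Delta-\sigma V_+$ and the min-max principle give, for every $a>0$, that the number of eigenvalues of $H$ below $-a$ is at most that of $-\Delta-\sigma V_+$; so it suffices to treat the nonnegative potential $V_+$. By the Birman--Schwinger principle the eigenvalues of $-\Delta-\sigma V_+$ below $-a$ are counted by the eigenvalues exceeding $1$ of $B_a=\sigma\,\sqrt{V_+}\,(-\Delta+a)^{-1}\sqrt{V_+}$, whose integral kernel is $\tfrac{\sigma}{2\pi}\sqrt{V_+(x)}\,K_0(\sqrt a\,|x-y|)\,\sqrt{V_+(y)}$, where $K_0$ is the modified Bessel function. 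Compactness of $B_a$ for every $a>0$ forces $Sp_{ess}(-\Delta-\sigma V_+)\subset[0,\infty)$, and the comparison above transfers this to $H$. The kernel $K_0(r)$ has the logarithmic singularity $K_0(r)\sim-\ln r$ as $r\to0$ and decays exponentially as $r\to\infty$, so I would split it into a near-diagonal part and a remainder. The exponentially decaying tail is harmless once $V_+\in L^1$, which is guaranteed by $\int V_+\ln(2+|x|)\,dx<\infty$ (the weight $\ln(2+|x|)$ being exactly what controls the contribution of large $|x|$, and what is needed to push the argument down to the bottom energy $a\downarrow0$, where the free kernel degenerates to the logarithmically \emph{growing} Green function $-\tfrac1{2\pi}\ln|x-y|$, yielding in fact finiteness of the whole negative spectrum). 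The genuinely delicate piece is the diagonal logarithmic singularity, and this is where $\int_{V>1}V\ln V\,dx<\infty$ enters: in two dimensions the Dirichlet form $\int|\nabla\psi|^2$ controls the exponential (Trudinger--Moser) Orlicz norm of $\psi$, whose dual class is $L\log L$, so an $L\log L$-type bound on $V_+$ makes $\psi\mapsto\int V_+|\psi|^2$ relatively form-compact with respect to $-\Delta+a$, equivalently $B_a$ compact.

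The main obstacle is precisely this two-dimensional logarithmic singularity of the Green function: in contrast to $d\ge3$, it is not square integrable against merely $L^1$ weights, so the naive Hilbert--Schmidt estimate for the diagonal part diverges. Handling it sharply requires the Orlicz duality between the Trudinger--Moser exponential class and $L\log L$, which is exactly why the hypothesis is phrased through $\int_{V>1}V\ln V$ rather than an $L^p$ bound on $V$. The remaining points — the reduction from sign-changing $V$ to $V_+$ via the min-max comparison, and the passage from compactness of $B_a$ at each level $-a$ to discreteness of the negative spectrum by letting $a\downarrow0$ — are routine once the Orlicz estimate is in place.
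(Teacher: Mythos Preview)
Your argument is correct. In case (a) you and the paper both prove compactness of $V(A-\lambda)^{-1}$ by the same truncation in $x$ and in $k$; the paper then applies the analytic Fredholm theorem to the family $I-\sigma VR_\lambda^0$, whereas you invoke Weyl's theorem on relatively compact perturbations---either abstract theorem finishes the job in one line once the compactness is in hand. In case (b) the paper gives no proof at all and simply cites \cite{barg}; your Birman--Schwinger sketch, with the min--max reduction from $V$ to $V_+$ and the Trudinger--Moser\,/\,$L\log L$ Orlicz duality to control the logarithmic singularity of $K_0$, is precisely the machinery of that reference, so you have supplied what the paper leaves to citation. One small remark: your parenthetical that the weight $\ln(2+|x|)$ lets one push $a\downarrow 0$ and obtain \emph{finiteness} of the negative spectrum is stronger than the stated discreteness; it is in fact true (this is the Bargmann-type counting in \cite{barg}), but for the theorem as stated you only need compactness of $B_a$ at each fixed $a>0$, and for that the unweighted condition $V_+\in L^1$ already handles the exponentially decaying tail of $K_0$.
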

\begin{remark}  Essential spectrum of two dimensional Schr\"{o}dinger operator with a potential from $L^1( \mathbb{R}^2)$ may cover the whole $\lambda$-axis, see Section 4.
\end{remark}
\begin{proof} When the last condition holds, the  statement is proved in \cite{barg} (with a slightly weaker assumption on $V$). Hence, we will consider here only the first case. 

Let the first condition hold and $VR_\lambda^0=V(x)(A-\lambda)^{-1}$. Operator $(H-\lambda)^{-1}, ~\lambda\notin[0,\infty),$ is bounded in $L^2( \mathbb{R}^d)$ if $I-\sigma VR_\lambda^0$ is invertible in $L^2( \mathbb{R}^d)$. Since
\[
\|VR_\lambda^0\|\leq\sup_x |V(x))|\sup_k\frac{1}{|a(k)-\lambda|},
\]
operator $VR_\lambda^0, ~\lambda\notin[0,\infty),$ is a limit as $N\to\infty$  of compact operators $\kappa(x/N)V(x)K_N(A-\lambda)^{-1}$ where $\kappa(x)\in C^\infty_0, ~\kappa(x)=1$ in a neighborhood of the origin, and operator $K_N$ after the Fourier transform is given by multiplication by the function $\kappa(k/N)$. Hence the operator $VR_\lambda^0, ~\lambda\notin[0,\infty),$ is compact. One can also easily check that this operator is analytic in $\lambda$, and $I-\sigma VR_\lambda^0$ is invertible for non-real $\lambda$ (and for $\lambda
\ll-1$). Hence, the statement follows immediately from the analytic Fredholm theorem. 

\end{proof}

Denote by $L$ the space of functions on $\mathbb R^2$ that are integrable with the weight $\ln(2+|x|)$, and let
\[
\|f\|_L=\int_{\mathbb R^2}\ln(2+|x|)|f(x)|dx.
\]
\begin{theorem}\label{t2}
Let $d=2$, and suppose that the following assumptions hold for some $\delta>0$: 
\begin{equation*}
\int_{\mathbb R^2}\ln^2(2+|x|)|V(x)|dx<\infty;~\quad \int_{\mathbb R^2}|V(x)|^{1+\delta}dx<\infty; \quad
\int_{\mathbb{R}^2}V(x)dx\geq 0.
\end{equation*}
Then the negative spectrum of  the operator $H=-\Delta-\sigma V$ is discrete, and $H$ has a negative eigenvalue $\lambda=\lambda(\sigma)$ for arbitrary $\sigma>0$.
\end{theorem}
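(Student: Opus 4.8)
The plan is to prove the two assertions separately, and to reduce the existence claim to the only genuinely new case, $\int V\,dx=0$.

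\emph{Discreteness.} I would obtain this from Theorem~\ref{tt2}(b): it suffices to check that the two hypotheses here imply the single condition $\int_{\mathbb R^2}V_+\ln(2+|x|)\,dx+\int_{V>1}V\ln V\,dx<\infty$ of that theorem. The first summand is dominated by the weighted $L^1$ assumption (since $\ln(2+|x|)\le C+\ln^2(2+|x|)$), and the second by the $L^{1+\delta}$ assumption together with the elementary bound $t\ln t\le C_\delta\,t^{1+\delta}$ for $t\ge1$. Consequently $\mathrm{Sp}_{ess}(H)=[0,\infty)$ and every negative spectral value is an eigenvalue, so for existence it is enough to exhibit a trial function $\psi\in H^1(\mathbb R^2)$, $\psi\not\equiv0$, with $\langle H\psi,\psi\rangle=\|\nabla\psi\|^2-\sigma\int V|\psi|^2<0$.

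\emph{Existence, case $\int V\,dx>0$.} Here the claim is already contained in the first theorem of this section: the operator $-\Delta$ is of the form (\ref{opl}) with $a(k)=|k|^2\le C|k|^2$, and $V\in L^1$ (since $\|V\|_1\le(\ln2)^{-2}\int\ln^2(2+|x|)|V|<\infty$) has positive integral, so nothing new is needed.

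\emph{Existence, case $\int V\,dx=0$, $V\not\equiv0$.} This is the heart of the matter, and the proof of the first theorem breaks down because it used $\widehat V(0)=\int V>0$. I would pass to the next order. Set $\phi:=(-\Delta)^{-1}V$, defined by $\widehat\phi(k)=\widehat V(k)/|k|^2$, and
\[
\mathcal I:=\|\nabla\phi\|_{L^2}^2=\int_{\mathbb R^2}\frac{|\widehat V(k)|^2}{|k|^2}\,dk ,
\]
which by Parseval equals a positive constant times $-\iint V(x)\ln|x-y|V(y)\,dx\,dy$ (the regularization ambiguity of the logarithmic kernel is multiplied by $(\int V)^2=0$ and drops out). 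After truncating $\phi$ to compact support at negligible cost in $\dot H^1$, take the trial function $\psi=\psi_0+\beta\phi$, where $\beta>0$ is small and $\psi_0$ is the logarithmic plateau equal to $1$ on $B_R$, equal to $\ln(R'/|x|)/\ln(R'/R)$ on $B_{R'}\setminus B_R$, and $0$ outside, so that $\|\nabla\psi_0\|^2=2\pi/\ln(R'/R)\to0$. Expanding the form and using $-\Delta\phi=V$ (hence $\|\nabla\phi\|^2=\int V\phi=\mathcal I$), the only $O(\beta)$ contribution is the potential cross term $-2\sigma\beta\int V\psi_0\phi\to-2\sigma\beta\mathcal I$; the plateau energy, the kinetic cross term (bounded by $\sqrt{\|\nabla\psi_0\|^2\,\mathcal I}$ via Cauchy--Schwarz), the tail corrections $\int V(1-\psi_0^2)$ and $\int V(1-\psi_0)\phi$, and the remaining $O(\beta^2)$ terms are all controllable. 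Choosing first $\beta$ small enough to dominate the $O(\beta^2)$ terms by $\sigma\beta\mathcal I$, and then $R\to\infty$ to push the plateau and tail errors below $\frac12\sigma\beta\mathcal I$, gives $\langle H\psi,\psi\rangle<-\frac12\sigma\beta\mathcal I<0$ for every $\sigma>0$.

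The conceptual point making this work for arbitrary sign-changing $V$ is that $\mathcal I$ is \emph{manifestly positive} (it is $\int|\widehat V|^2/|k|^2$, vanishing only for $V\equiv0$), so the decisive linear-in-$\beta$ term always has the right sign for $\beta>0$; no condition on the sign structure of $V$ is needed beyond $\int V\ge0$. I expect the main obstacle to be the analytic one of justifying $0<\mathcal I<\infty$ and the vanishing of all error terms. Finiteness of $\mathcal I$ — equivalently, absolute convergence of the double logarithmic integral — is exactly where the hypotheses enter: the weighted $L^1$ bound handles the large-$|x-y|$ region through $\ln|x-y|\le\ln(1+|x|)+\ln(1+|y|)$, while the $L^{1+\delta}$ bound handles the logarithmic singularity at $x=y$ via Young's inequality; the same moment bounds, together with $\int V=0$, give the boundedness and decay of $\phi$ used to kill the tail terms $\int V(1-\psi_0)\phi$.
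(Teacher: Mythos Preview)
Your proposal is correct. The preliminary reductions—discreteness via Theorem~\ref{tt2}(b) and the case $\int V>0$ via the first theorem of the section—coincide with the paper's argument. For the core case $\int V=0$, however, you take a genuinely different route. The paper works on the resolvent side: writing a prospective eigenfunction as $\psi=(-\Delta-\lambda)^{-1}f$, it reduces the eigenvalue problem to an integral equation for $f$ in the weighted space $L=L^1(\ln(2+|x|)\,dx)$, splits off the singular part $-\tfrac{1}{4\pi}\ln|\lambda|$ of the two-dimensional Green kernel, and performs a Lyapunov--Schmidt reduction to a scalar equation $p(\sigma,\lambda)=1+\tfrac{\sigma^{2}m}{4\pi}\ln|\lambda|\,(1+o(1))=0$ with $m=\int_{\mathbb R^2}|\widehat V(k)|^{2}/|k|^{2}\,dk>0$; continuity in $\lambda$ then produces a root $\lambda(\sigma)<0$ for each small $\sigma$. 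You instead argue variationally with the trial function $\psi_0+\beta\phi$, $\phi=(-\Delta)^{-1}V$, and the very same quantity $\mathcal I=m$ emerges as the decisive $O(\beta)$ coefficient. Your route is more elementary—no resolvent kernels, no operator families in $\lambda$, no intermediate function space $L$—while the paper's route yields as a byproduct the small-coupling asymptotics $\ln|\lambda(\sigma)|\sim-4\pi/(\sigma^{2}m)$. One detail in your sketch worth making explicit: the truncation of $\phi$ ``at negligible cost in $\dot H^1$'' relies on $\phi\in L^\infty$ (so that multiplying by a logarithmic plateau at scale $S$ costs only $O(\|\phi\|_\infty^{2}/\ln S)$ in Dirichlet energy); this boundedness does follow from $\int V=0$ together with the $\ln^{2}$-weighted integrability, but it is the linchpin that lets you pass from $\dot H^1$ to a bona fide $H^1$ trial function and deserves an explicit line.
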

\begin{proof} Theorem \ref{tt2} implies the discreteness of the negative spectrum. Thus it
is enough to justify the second statement regarding the existence of a negative
eigenvalue and prove it only when the integral of $V$ is zero. Since the operator $-\sigma^{-1}\Delta- V,~ \sigma>0,$ decreases monotonically when $\sigma$ increases, it is enough to show the existence of a negative eigenvalue of $H$ for arbitrarily small $\sigma>0$. This will imply its existence for all $\sigma>0$.

We will look for eigenfunctions of $H$ with $\lambda<0$ in the form $\psi=(-\Delta-\lambda)^{-1}f, ~f\in L$. Using the Fourier transform, one can check that  $\psi\in L^2(\mathbb R^2)$. This function also satisfies the equation $H\psi-\lambda\psi=0$ if $f$ is a solution of the equation
\begin{equation}\label{inteq2}
f(x)-\sigma V(x)\int_{\mathbb{R}^2}G(\lambda, x-y)f(y)dy=0, \quad f\in L, \quad \lambda<0,
\end{equation}
where $G$ is the integral kernel of the resolvent $(-\Delta-\lambda)^{-1}$, i.e., $G=\frac{1}{2\pi}K_0(\sqrt{-\lambda}|x-y|)$. Here $K_0$ is the modified Bessel function of the second kind, see \cite{leb}.
 Thus the theorem will be proved if we show that, for arbitrarily small $\sigma>0$, there exists $\lambda=\lambda(\sigma)<0$ for which equation (\ref{inteq2}) has a non-trivial solution $f\in L$.

 We fix a smooth function $h$ with a compact support such that $\int_{\mathbb{R}^2} hdx=1$, and write $f$ in the form
 \[
f=C_fh+f^\bot, \quad
{\rm where}~~~ C_f=\int_{\mathbb{R}^2}f(x)dx, \quad \int_{\mathbb{R}^2}f^\bot(x)dx=0.
 \]

We single out the singularity of $G$ as $\lambda\to-0$: $G= \frac{-1}{4\pi}\ln|\lambda|+g(\lambda, |x-y|)$, where $g+\frac{1}{2\pi}\ln|x-y|$ is continuous when $\lambda\leq0$ and there are constants $C,c$ such that
\begin{equation}\label{gl2}
 |g|\leq C(|\ln|x-y||+1), \quad  g(0,x-y)=\frac{-1}{2\pi}\ln|x-y|+c, \quad \lambda\leq0.
\end{equation}
We put $G= \frac{-1}{4\pi}\ln|\lambda|+g(\lambda, |x-y|)$ into (\ref{inteq2}) and rewrite it as a system for $C_f$ and $f^\bot$:
\begin{equation}\label{s122}
C_f-\sigma C_fB_\lambda h-\sigma B_\lambda f^\bot=0,
\end{equation}
\begin{equation}\label{s1232}
f^\bot(x)+C_f\frac{\sigma V(x)}{4\pi}\ln|\lambda|-\sigma C_f[A_\lambda h-(B_\lambda h)h]
-\sigma [A_\lambda f^\bot-(B_\lambda f^\bot)h]=0,
\end{equation}
where
\begin{align}\label{A02}
A_\lambda:L\to L, \quad A_\lambda f=V(x)\int_{\mathbb{R}^2}g(\lambda, x-y)f(y)dy, \quad \lambda\leq0, \\ B_\lambda:L\to\mathbb R, \quad B_\lambda f=\int_{\mathbb{R}^2}\int_{\mathbb{R}^2}V(x)g(\lambda, x-y)f(y)dydx,  \quad \lambda\leq0\label{B02}.
\end{align}

Operators $A_\lambda,B_\lambda$ are bounded uniformly in $\lambda\in[-1,0]$ and strongly continuous when $\lambda\in[-1,0]$.  Indeed, the first estimate in (\ref{gl2}) implies that
\begin{align}\nonumber
\|A_\lambda f\|_L  \leq C\int_{\mathbb{R}^2}\int_{\mathbb{R}^2}\ln(2+|x|)|V(x)|(|\ln| x-y||+1)f(y)|dydx \\ \leq C\int_{\mathbb{R}^2}(I_1(y)+I_2(y))|f(y)|dy,\quad I_i=\int_{D_i}\ln(2+|x|) |V(x)|(|\ln| x-y||+1)dx ,\label{bb2}
\end{align}
where $D_1=\{x:~|x-y|<1\}, ~D_2=\{x:~|x-y|>1\}$. We have
\begin{align*}
I_1\leq C\ln(2+|y|)\left(\int_{|x-y|<1}| V(x)|^{1+\delta}dx\right)^{\frac{1}{1+\delta}}\left(\int_{|x-y|<1}(|\ln| x-y||+1)^{\frac{1+\delta}{\delta}}dx\right)^{\frac{\delta}{1+\delta}}  \\ \leq C_1\ln(2+|y|),
\end{align*}
\[
I_2\leq C\int_{|x-y|>1}| V(x)|\ln^2(2+|x|)\ln(2+|y|)dx\leq C_2\ln(2+|y|).
\]
The estimates on $I_i$ together with (\ref{bb2}) prove the uniform boundedness of $A_\lambda$. The boundedness of $B_\lambda$ can be proved similarly. The same arguments complemented by the continuity of $g$ and the dominated convergence theorem can be used to prove the strong continuity of both operators.

The boundedness of $A_\lambda, B_\lambda$ allows us to rewrite (\ref{s122}), (\ref{s1232}) for small $\sigma\geq0$ and $\lambda\in [-1,0]$ in a slightly shorter form:
\begin{equation}\label{s12a2}
C_f[1+O(\sigma)] -\sigma B_\lambda f^\bot=0,
\end{equation}
\begin{equation}\label{s123a2}
f^\bot(x)+\mathbf{O}(\sigma)f^\bot+C_f[\frac{\sigma V(x)}{4\pi}\ln|\lambda|+\mathbf{O}_1(\sigma)]=0,
\end{equation}
where $\mathbf{O}(\sigma)$ is an operator in $L$ of order $\sigma$ and $\mathbf{O}_i(\sigma)$ are elements of $L$ of order $\sigma$. We solve equation (\ref{s123a2}) for $f^\bot$ and obtain:
\[
f^\bot=-C_f[I+\mathbf{O}(\sigma)]^{-1}(\frac{\sigma V(x)}{4\pi}\ln\lambda+\mathbf{O}_1(\sigma))=-C_f[\frac{\sigma \ln|\lambda|}{4\pi}(V(x)+\mathbf{O}_2(\sigma))+\mathbf{O}_3(\sigma)].
\]
We put the latter expression into (\ref{s12a2}) and obtain
\begin{equation}\label{cf2}
C_fp=0, \quad {\rm where} \quad p=p(\sigma,\lambda)=1+o(1)+\frac{\sigma^2 m}{4\pi}\ln|\lambda|(1+o(1)),
\end{equation}
with the remainder terms vanishing when $\sigma\to +0,~ \lambda\to-0$, and
\begin{equation}\label{dm}
m=B_0V(x)=\int_{\mathbb{R}^2}\int_{\mathbb{R}^2}V(x)(\frac{\ln|x-y|}{-2\pi}+c)V(y)dydx=\int_{\mathbb{R}^2}\int_{\mathbb{R}^2}V(x)
\frac{\ln|x-y|}{-2\pi}V(y)dydx.
\end{equation}
Since equation (\ref{inteq2}) is reduced to (\ref{cf2}), the values of $\lambda$ for which $p=0$ are the eigenvalues of  operator $H$.

From (\ref{dm}) and the estimate $\|A_0V\|_L\leq\infty$ it follows that $|m|<\infty$. Since the Fourier transform of the function $\frac{-1}{2\pi}\ln|x|$ is $1/|k|^2$,  the Plancherel identity implies that
\begin{equation}\label{mP2}
m=\int_{\mathbb R^2}\frac{|\widehat{V}(k)|^2}{|k|^2}dk>0.
\end{equation}

We fix $a\in(0,1)$ so small that the remainder terms in (\ref{cf2}) satisfy $|o(1)|<1/2$ when $-\lambda,\sigma\in[0,a] $. Then we fix $\sigma=\sigma'\in(0,a)$ as small as we please for which $d(\sigma',a)>0$. Since $\lim_{\lambda\to -0}d(\sigma', \lambda)=-\infty$, there exists $b\in(0,a)$ such that $d(\sigma', b)<0$. Since the strong continuity of operators $A_\lambda,B_\lambda$ implies the continuity of the remainder terms in (\ref{cf2}), there exists a point $\lambda =\lambda(\sigma')\in(-a,-b)$ for which $p(\sigma', \lambda)=0$. This point is an eigenvalue of $H$.

\end{proof}

Consider now a one-dimensional version of Theorem \ref{t2} in the case when the Laplacian is replaced by a more general operator. A very natural class of these operators is presented by fractional powers $A=(-\Delta)^{\alpha/2}$ of the Laplacian. We assume that $\alpha\leq2$, so that $L$ is a generator of a symmetric stable process with parameter $\alpha$, and that $\alpha> 1$, so that the process is recurrent. In fact, the process remains recurrent when $\alpha=1$, but the statement and some estimates in the proof require changes, and we decided to exclude this case from consideration.

\begin{theorem}\label{t3}
Let $d=1, ~H=(-\Delta)^{\alpha/2}-\sigma V(x), ~1<\alpha\leq2$, and suppose that the following assumptions hold:
\begin{equation*}
\int_\mathbb R(1+|x|^{2\alpha-2})|V(x)|dx<\infty, \quad
\int_{\mathbb{R}^d}V(x)dx\geq 0.
\end{equation*}
Then the negative spectrum of  the operator $H=-\Delta-\sigma V$ is discrete, and $H$ has a negative eigenvalue $\lambda=\lambda(\sigma)$ for each $\sigma>0$.
\end{theorem}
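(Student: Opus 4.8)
I would follow the scheme of the proof of \thmref{t2}, replacing the logarithmic singularity of the two-dimensional resolvent by the power-law singularity of the one-dimensional fractional resolvent. Discreteness of the negative spectrum is obtained as in \thmref{tt2}: one shows that $VR^0_\lambda=V(A-\lambda)^{-1}$, $A=(-\Delta)^{\alpha/2}$, is compact (the multiplier $(|k|^\alpha+|\lambda|)^{-1}$ is bounded and, since $\alpha>1$, decays at infinity, so $VR^0_\lambda$ is a limit of compact truncations) and applies the analytic Fredholm theorem. Existence of a negative eigenvalue when $\int V>0$ is already covered by the first theorem of Section~2, whose hypothesis $a(k)\le C|k|$ near $k=0$ holds because $\alpha>1$; hence it remains only to treat the borderline case $\int_{\mathbb R}V\,dx=0$. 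Since the quadratic form of $\sigma^{-1}(-\Delta)^{\alpha/2}-V$ decreases monotonically in $\sigma$, it suffices to produce a negative eigenvalue for all sufficiently small $\sigma>0$.

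I would seek eigenfunctions in the form $\psi=(A-\lambda)^{-1}f$, $\lambda<0$; the Fourier transform shows $\psi\in L^2(\mathbb R)$, and $H\psi=\lambda\psi$ reduces to $f(x)-\sigma V(x)\int_{\mathbb R}G(\lambda,x-y)f(y)\,dy=0$, where $G(\lambda,z)=\frac1{2\pi}\int_{\mathbb R}\frac{e^{ikz}}{|k|^\alpha+|\lambda|}\,dk$ is the kernel of $(A-\lambda)^{-1}$. The crux is the separation of the singular part of $G$ as $\lambda\to-0$. Writing $G(\lambda,z)=\frac1{2\pi}\int\frac{dk}{|k|^\alpha+|\lambda|}+\frac1{2\pi}\int\frac{e^{ikz}-1}{|k|^\alpha+|\lambda|}\,dk$ and rescaling $k=|\lambda|^{1/\alpha}u$ in the first integral yields $G(\lambda,z)=c_\alpha|\lambda|^{\gamma}+g(\lambda,z)$ with $\gamma=\tfrac1\alpha-1\in[-\tfrac12,0)$, $c_\alpha=\frac1{2\pi}\int_{\mathbb R}\frac{du}{|u|^\alpha+1}>0$, and an oscillatory remainder $g$. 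Here $1<\alpha<2$ is used twice: $\alpha>1$ makes $|k|^{-\alpha}$ integrable at infinity, and $\alpha<2$ makes $\frac{e^{ikz}-1}{|k|^\alpha}$ integrable at the origin, so that $g(\lambda,z)\to g(0,z)=\frac1{2\pi}\int\frac{e^{ikz}-1}{|k|^\alpha}\,dk=-C_\alpha|z|^{\alpha-1}$ with $C_\alpha>0$. Splitting the $k$-integral at $|k|=1$ and using $|e^{ikz}-1|\le\min(2,|k||z|)$ gives the uniform bound $|g(\lambda,z)|\le C(1+|z|^{\alpha-1})$ for all $\lambda\in[-1,0]$, the exact analogue of the bound $|g|\le C(|\ln|x-y||+1)$ in \thmref{t2}.

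I would then set $f=C_fh+f^\bot$, $\int h=1$, $\int f^\bot=0$, and introduce $A_\lambda f=V(x)\int g(\lambda,x-y)f(y)\,dy$ and $B_\lambda f=\int\int V g f$ on the weighted space $L$ with $\|f\|_L=\int_{\mathbb R}(1+|x|^{\alpha-1})|f|\,dx$. This is precisely where the hypothesis enters: from $|g(\lambda,x-y)|\le C(1+|x|^{\alpha-1}+|y|^{\alpha-1})$ and $(1+|x|^{\alpha-1})^2\le C(1+|x|^{2\alpha-2})$ one gets, exactly as in the estimates of $I_1,I_2$ in \thmref{t2}, that $A_\lambda,B_\lambda$ are bounded uniformly in $\lambda\in[-1,0]$ and strongly continuous, the finiteness being guaranteed by $\int_{\mathbb R}(1+|x|^{2\alpha-2})|V|\,dx<\infty$. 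Substituting $G=c_\alpha|\lambda|^\gamma+g$ and projecting onto the constants and onto $\{\int=0\}$ produces the analogues of the system (\ref{s122})--(\ref{s1232}) and, after solving for $f^\bot$ at small $\sigma$, of the scalar equation (\ref{cf2}), all obtained by the substitution $\tfrac1{4\pi}\ln|\lambda|\mapsto -c_\alpha|\lambda|^{\gamma}$; thus $C_f\,p(\sigma,\lambda)=0$ with $p=1+O(\sigma)-\sigma^2 c_\alpha m|\lambda|^{\gamma}(1+o(1))$. The double integral $m=B_0V=-C_\alpha\int\!\int V(x)|x-y|^{\alpha-1}V(y)\,dy\,dx$ converges absolutely by the weight hypothesis, and Plancherel (interchange legitimate since $\int V=0$) gives $m=\frac1{2\pi}\int_{\mathbb R}\frac{|\widehat V(k)|^2}{|k|^\alpha}\,dk>0$.

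The conclusion is then immediate: fix small $\sigma>0$ with the remainders $<1/2$ in modulus on $[-a,0]$, so $p(\sigma,-a)>0$, while $-\sigma^2 c_\alpha m|\lambda|^{\gamma}\to-\infty$ forces $p(\sigma,\lambda)\to-\infty$ as $\lambda\to-0$; by the continuity inherited from the strong continuity of $A_\lambda,B_\lambda$ there is $\lambda(\sigma)\in(-a,0)$ with $p=0$, i.e.\ a negative eigenvalue. The main obstacle I expect is the uniform-in-$\lambda$ control of the power-law kernel $g(\lambda,z)$: unlike the logarithm in two dimensions, one must handle the genuine $\lambda$-dependent scale $|\lambda|^{1/\alpha}$, prove the bound $|g(\lambda,z)|\le C(1+|z|^{\alpha-1})$ and the convergence $g(\lambda,\cdot)\to g(0,\cdot)$ through the oscillatory integral, and verify that the weight $|x|^{\alpha-1}$ matches the kernel so that $\int(1+|x|^{2\alpha-2})|V|<\infty$ is exactly sufficient. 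The excluded endpoint $\alpha=1$ fails precisely here, where both the tail and the near-origin contributions become logarithmic.
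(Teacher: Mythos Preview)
Your proposal is correct and follows essentially the same route as the paper: the paper likewise reduces to the case $\int V=0$ and small $\sigma$, extracts the power singularity $G(\lambda,x)=c_1(\alpha)|\lambda|^{(1-\alpha)/\alpha}+g(\lambda,x)$ with $|g(\lambda,x)|\le C|x|^{\alpha-1}$ and $g(0,x)=c_2(\alpha)|x|^{\alpha-1}$, works in the weighted space $L$ with weight $1+|x|^{\alpha-1}$, derives the same scalar equation $p(\sigma,\lambda)=1+o(1)-c_1(\alpha)\sigma^2 m\,|\lambda|^{(1-\alpha)/\alpha}(1+o(1))$ with $m=\int|\widehat V|^2/|k|^\alpha\,dk>0$, and concludes by the intermediate value theorem. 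The only differences are cosmetic: the paper obtains the bound on $g$ by the substitution $s\nu\to s$ rather than your split at $|k|=1$, and for discreteness it simply cites \cite{barg} rather than sketching the compactness argument (note that your sketch needs a small adjustment, since $V$ is not assumed bounded here, so one should use the Birman--Schwinger factorization $|V|^{1/2}R^0_\lambda|V|^{1/2}$, which is Hilbert--Schmidt because the resolvent kernel is bounded for $\alpha>1$ and $V\in L^1$).
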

\begin{proof} The proof copies the one of Theorem \ref{t2} with some steps being simpler. Using Theorem 3.1 from \cite{barg}, one can prove that the negative spectrum of $H$ is discrete. Thus we can assume that the integral of $V$ is zero and justify the existence of a negative eigenvalue only for arbitrary small $\sigma\geq0$, see the first paragraph of the proof of Theorem \ref{t2}.

Space $L$ now consists of functions integrable with the weight $1+|x|^{\alpha-1}$ with
\[
\|f\|_{L}=\int_\mathbb R(1+|x|^{\alpha-1})|f(x)|dx.
\]
We reduce again the eigenvalue problem for operator $H$ to the one-dimensional version of equation (\ref{inteq2}) with $G(\lambda, x-y)$ being the integral kernel of the operator $((-\Delta)^{\alpha/2}-\lambda)^{-1}$.

We look at the singularity of $G$ as $\lambda\to-0$. Denote $\nu=|x||\lambda|^{1/\alpha}$. We have

\[
G(\lambda,x)=\frac{1}{2\pi}\int_{-\infty}^\infty\frac{e^{ikx}dk}{|\lambda|+|k|^\alpha}=\frac{1}{\pi}\int_{0}^\infty\frac{\cos(kx)dk}{|\lambda|+|k|^\alpha}=
\frac{|\lambda|^{\frac{1-\alpha}{\alpha}}}{\pi}\int_{0}^\infty\frac{\cos(s\nu)ds}{1+s^\alpha}=
\]
\begin{equation}\label{sing}
=\frac{|\lambda|^{\frac{1-\alpha}{\alpha}}}{\pi}\int_{0}^\infty\frac{ds}{1+s^\alpha}-
\frac{|\lambda|^{\frac{1-\alpha}{\alpha}}}{\pi}\int_{0}^\infty\frac{2\sin^2(s\nu/2)ds}{1+s^\alpha}:=c_1(\alpha)|\lambda|^{\frac{1-\alpha}{\alpha}}+g(\lambda,x), \quad c_1(\alpha)>0.
\end{equation}
We replace $1+s^\alpha$ in the last integrand by $s^\alpha$ and make the substitution $ s\nu\to s$. This leads to the estimate
\begin{equation}\label{ge}
|g(\lambda,x)|\leq C|\lambda|^{\frac{1-\alpha}{\alpha}}\nu^{\alpha-1}=C|x|^{\alpha-1}, \quad \lambda<0.
\end{equation}

Function $g$ is continuous for $\lambda>0$ and can be extended by continuity when $\lambda\to -0$ with
\begin{equation}\label{g00}
g(0,x)=c_2(\alpha)|x|^{\alpha-1}, \quad c_2(\alpha)>0.
\end{equation}
The latter statement follows from the relation
\[
g(\lambda,x)=\frac{|\lambda|^{\frac{1-\alpha}{\alpha}}}{\pi}\int_{0}^\infty\frac{2\sin^2(s\nu/2)ds}{s^\alpha}+
\frac{|\lambda|^{\frac{1-\alpha}{\alpha}}}{\pi}\int_{0}^\infty\frac{2\sin^2(s\nu/2)ds}{s^\alpha(1+s^\alpha)}
\]
\[
=\frac{|x|^{\alpha-1}}{\pi}\int_{0}^\infty\frac{2\sin^2(s/2)ds}{s^\alpha}+
\frac{|x|^{\alpha-1}}{\pi}\int_{0}^\infty\frac{2\sin^2(s/2)ds}{s^\alpha(1+(s/\nu)^\alpha)},
\]
where the first term is $c_2(\alpha)|x|^{\alpha-1}$ and the second one vanishes when $x$ is fixed and $\lambda\to-0$.

The relations (\ref{ge}), (\ref{g00}) are the analogue of (\ref{gl2}). Using (\ref{sing})-(\ref{g00}) and the  representation $f=C_fh+f^\bot, ~C_f=\int_{\mathbb{R}}f(x)dx$, we rewrite the one-dimensional version of equation (\ref{inteq2}) as the system (\ref{s122})-(\ref{B02}) with the singularity $\frac {\ln|\lambda|}{-4\pi}$ of $G$ in (\ref{s1232}) replaced by $c_1(\alpha)|\lambda|^{\frac{1-\alpha}{\alpha}}$ and the integration in (\ref{A02}), (\ref{B02}) over $\mathbb R$ instead of $\mathbb R^2$. The remaining steps of the proof are the same with the following analogue of formula (\ref{cf2}) for function $p$ defining the eigenvalues of $H$:
\[
p=p(\sigma,\lambda)=1+o(1)-c_1(\alpha)|\lambda|^{\frac{1-\alpha}{\alpha}}\sigma^2 m(1+o(1)), \quad m=\int_{\mathbb R^2}\frac{|\widehat{V}(k)|^2}{|k|^\alpha}dk>0.
\]

\end{proof}

\section {ODE on the half-line.}

Consider operator $H$ in $L^2(\mathbb R_+)$ defined by the following relations
\begin{equation}\label{ode1}
H\psi=-\psi''(x)-\sigma V(x)\psi(x), \quad x>0, \quad \psi'(0)=0, \quad {\rm where}    \quad \int_0^\infty x|V(x)|dx<\infty.
\end{equation}
\begin{theorem}\label{tl}
If $\int_0^\infty V(x)dx\geq0$, then operator $H$ defined in (\ref{ode1}) has a negative eigenvalue $\lambda=\lambda(\sigma)$ for each $\sigma>0$.
\end{theorem}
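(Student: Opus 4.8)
The plan is to reduce the eigenvalue problem to an integral equation on the half-line, exactly as in the proofs of Theorems~\ref{t2} and~\ref{t3}, but now exploiting the one-dimensional recurrent structure of $-\psi''$ with the Neumann condition. Since $-\psi''\geq 0$ and the operator is monotone in $\sigma$, it suffices to produce a negative eigenvalue for arbitrarily small $\sigma>0$; and by the same monotonicity argument used earlier, it is enough to treat the borderline case $\int_0^\infty V\,dx=0$, the strictly positive case being easier. First I would write the resolvent kernel $G(\lambda,x,y)$ of $(-\partial_x^2-\lambda)$ on $\mathbb R_+$ with the Neumann boundary condition $\psi'(0)=0$. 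For $\lambda=-\kappa^2<0$ this kernel is explicit: using the Neumann condition one gets
\[
G(\lambda,x,y)=\frac{1}{2\kappa}\left(e^{-\kappa|x-y|}+e^{-\kappa(x+y)}\right),\qquad \kappa=\sqrt{-\lambda}.
\]
The key feature, analogous to the logarithmic singularity in $d=2$ and the $|\lambda|^{(1-\alpha)/\alpha}$ singularity in Theorem~\ref{t3}, is the behavior as $\lambda\to-0$: here $G\sim \tfrac{1}{2\kappa}\cdot 2=\tfrac{1}{\kappa}=|\lambda|^{-1/2}$, so $G$ blows up like $|\lambda|^{-1/2}$ uniformly, and one can split $G=\frac{1}{\kappa}+g(\lambda,x,y)$ where $g$ stays bounded (in the appropriate weighted sense) as $\kappa\to 0$.

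Next I would set up the same Lyapunov--Schmidt decomposition $f=C_f h+f^\perp$ with $\int h=1$ and $\int f^\perp=0$, in a weighted space $L$ of functions integrable against the weight $1+x$ dictated by Bargmann's condition (\ref{be}). Substituting the splitting $G=\kappa^{-1}+g$ into the integral equation $f-\sigma V\,G f=0$ yields, just as in (\ref{s122})--(\ref{s1232}), a scalar equation for $C_f$ coupled to a bounded equation for $f^\perp$. Solving the $f^\perp$-equation (invertible for small $\sigma$ by Neumann series) and substituting back produces the scalar condition
\[
p(\sigma,\lambda)=1+o(1)-\frac{\sigma^2 m}{\kappa}\,(1+o(1))=0,
\]
where the constant $m$ is the analogue of (\ref{mP2}),(\ref{dm}): a positive quantity built from the quadratic form of $V$ against the kernel $g(0,\cdot)$. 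The point is that the singular coefficient $1/\kappa\to+\infty$ as $\lambda\to-0$, so $p$ runs from a positive value (at $\lambda=-a$) to $-\infty$ (as $\lambda\to -0$), and the intermediate value theorem, justified by the strong continuity of the operators in $\lambda$, gives a root $\lambda(\sigma)<0$.

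The main obstacle is verifying that the relevant constant $m$ is strictly positive, which is what forces the root to exist. Unlike the whole-line cases, the Fourier/Plancherel argument producing (\ref{mP2}) is not directly available on the half-line with a Neumann condition, so I would instead identify $m$ with the limiting quadratic form
\[
m=\lim_{\kappa\to 0}\Big(\text{form of }V\text{ against }G\Big)
=\iint_{\mathbb R_+\times\mathbb R_+} V(x)\,g(0,x,y)\,V(y)\,dx\,dy,
\]
and show positivity using the recurrence of the process, i.e. the divergence of $\int G(\lambda,x,y)\,dy$ as $\lambda\to-0$, together with the assumption $\int V=0$ which removes the most singular $\kappa^{-1}$ contribution and leaves a genuinely positive remainder. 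Making this positivity rigorous (in particular checking that $g(0,x,y)$ defines a positive-definite form on the subspace $\int V=0$, or equivalently relating $m$ to $\int_0^\infty|\widehat{V_{\mathrm{even}}}(k)|^2/k^2\,dk$ after an even reflection that converts the Neumann problem on $\mathbb R_+$ into a problem on $\mathbb R$) is the technical heart of the argument; the secondary difficulty is confirming that Bargmann's condition $\int_0^\infty x|V|\,dx<\infty$ provides exactly the weight needed for the boundedness and strong continuity of the analogues of $A_\lambda,B_\lambda$ uniformly in $\lambda\in[-1,0]$.
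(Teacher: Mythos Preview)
Your approach is genuinely different from the paper's, and the difference matters. The paper does \emph{not} use the resolvent/Lyapunov--Schmidt machinery here; instead it proceeds by classical ODE methods. First it establishes a ``Schpat''-type lemma: under Bargmann's condition the equation $\psi''+\sigma V\psi=0$ has a fundamental system $\psi_1\sim 1,\ \psi_2\sim x$ at infinity. Then it expands the Neumann solution $\psi_0$ (with $\psi_0(0)=1,\ \psi_0'(0)=0$) in this basis, computes the coefficients to second order in $\sigma$, and shows that the coefficient of $\psi_2$ is strictly negative when $\int V\geq 0$ (the key constant being $c^2=\int_0^\infty(\int_\eta^\infty V)^2\,d\eta$). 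Hence $\psi_0(x)\to-\infty$, so $\psi_0$ has a zero $x_0$; the truncated function $\phi=\psi_0\mathbf 1_{[0,x_0]}$ then gives $\langle H\phi,\phi\rangle=0$, and doubling $\sigma$ makes the form negative.

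Your integral-equation plan would work, but not under the hypothesis of the theorem as stated. The ``secondary difficulty'' you flag is in fact the obstruction: with the Neumann Green's function one has $g(0,x,y)=-\max(x,y)$, which grows linearly, so for $A_\lambda f=V\int g(\lambda,\cdot,y)f(y)\,dy$ to act boundedly on $L^1((1+x)\,dx)$ you need $\int_0^\infty (1+x)^2|V(x)|\,dx<\infty$, i.e.\ Simon's condition (\ref{ad1}), not merely Bargmann's $\int x|V|<\infty$. (Concretely, take $V(x)\sim x^{-2}(\ln x)^{-2}$ at infinity: Bargmann holds, but $\|A_0 V\|_L=\infty$.) This is exactly why the paper switches methods: the $\alpha=2$ case of Theorem~\ref{t3} already gives the whole-line result under $\int(1+x^2)|V|<\infty$, and the point of Theorem~\ref{tl} is to get the half-line Neumann case under the \emph{weaker} Bargmann assumption. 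As a pleasant consistency check, your constant $m$ can be computed directly: with $\int V=0$ one finds
\[
m=-\iint V(x)\max(x,y)V(y)\,dx\,dy=\int_0^\infty\Bigl(\int_\eta^\infty V\Bigr)^2 d\eta,
\]
which is precisely the $c^2$ that drives the sign in the paper's ODE argument. So the two approaches are morally the same at the level of the decisive constant; the ODE route simply avoids the weighted-operator bound that your approach cannot supply under Bargmann alone.
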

We will need a couple of lemmas. The following statement can be found in \cite{st} under a more restrictive assumption on $V$. It is referred to as the Schpat theorem there.
\begin{lemma}
Equation
\begin{equation}\label{ode}
\psi''(x)+\sigma V(x)\psi(x)=0, \quad \int_0^\infty x|V(x)|dx<\infty,
\end{equation}
 has solutions $\psi_1,\psi_2$ with the following asymptotic behavior at infinity:
\begin{equation}\label{as}
\psi_1(x)=1+o(1), \quad \psi_1'(x)=o(1), \quad  \psi_2(x)=x(1+o(1)),\quad \psi_2'(x)=1+o(1), \quad x\to\infty.
\end{equation}
\end{lemma}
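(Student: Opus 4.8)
The plan is to regard equation (\ref{ode}) as a perturbation of $\psi''=0$, whose solution space is spanned by the constant $1$ and the linear function $x$, and to produce the two required solutions by a fixed-point construction near infinity. Because the assertions in (\ref{as}) concern only the behaviour as $x\to\infty$, it is enough to build $\psi_1,\psi_2$ on a half-line $[X,\infty)$ with $X$ large; each then extends to smaller $x$ as a solution of the linear equation on any interval where $V$ is integrable.

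First I would construct $\psi_1$. Imposing the data $\psi_1(\infty)=1,\ \psi_1'(\infty)=0$ and integrating (\ref{ode}) twice converts it into the Volterra equation
\[
\psi_1(x)=1-\sigma\int_x^\infty (t-x)V(t)\psi_1(t)\,dt ,
\]
which I would solve by the contraction mapping principle on $C_b[X,\infty)$ with the supremum norm. The decisive estimate is
\[
\sigma\sup_{x\ge X}\int_x^\infty (t-x)|V(t)|\,dt\le \sigma\int_X^\infty t|V(t)|\,dt ,
\]
which is $<\tfrac12$ for $X$ large by the hypothesis $\int_0^\infty t|V|\,dx<\infty$; this makes the map a contraction and yields a fixed point with $\psi_1(x)\to1$. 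Differentiating gives $\psi_1'(x)=\sigma\int_x^\infty V(t)\psi_1(t)\,dt$, whence $|\psi_1'(x)|\le C\int_x^\infty|V|\,dt\to0$ and, crucially, the refined bound $x|\psi_1'(x)|\le C\int_x^\infty t|V(t)|\,dt\to0$, obtained by absorbing the factor $x\le t$ into the integrand.

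The hard part is $\psi_2$, and here I expect the main obstacle: the naive integral equation $\psi_2(x)=x-\sigma\int_x^\infty(t-x)V(t)\psi_2(t)\,dt$ has integrand of size $(t-x)\,t\,|V(t)|$ when $\psi_2\sim t$, whose convergence would demand the stronger condition $\int t^2|V|\,dx<\infty$. I would bypass this entirely by reduction of order: on $[X,\infty)$, where $\psi_1$ stays within $\tfrac12$ of $1$ and hence never vanishes, set
\[
\psi_2(x)=\psi_1(x)\int_X^x \frac{ds}{\psi_1(s)^2}.
\]
This automatically solves (\ref{ode}) and has unit Wronskian with $\psi_1$. Since $\psi_1\to1$, a Cesàro argument gives $\int_X^x \psi_1^{-2}\,ds=x(1+o(1))$, so $\psi_2(x)=x(1+o(1))$; and from $\psi_2'(x)=\psi_1(x)^{-1}+\psi_1'(x)\int_X^x\psi_1^{-2}\,ds$ the first term tends to $1$ while the second is $O\!\big(x|\psi_1'(x)|\big)=O\!\big(\int_x^\infty t|V|\,dt\big)\to0$ by the refined bound from the previous step, giving $\psi_2'(x)\to1$. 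Thus the reduction-of-order device is exactly what converts the available weight $\int t|V|$ into the full asymptotics for the growing solution; the only remaining bookkeeping is the Cesàro estimate and the continuation of both solutions to smaller $x$, both routine.
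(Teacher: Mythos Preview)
Your approach is essentially identical to the paper's: construct $\psi_1$ via the same Volterra integral equation solved by contraction (the paper phrases your ``choose $X$ large'' step as ``without loss of generality shift the origin so that $\alpha=|\sigma|\int_0^\infty x|V|\,dx<1$''), then obtain $\psi_2$ by reduction of order via $\psi_2(x)=\psi_1(x)\int_{x_0}^x\psi_1^{-2}(\xi)\,d\xi$. Your refined bound $x|\psi_1'(x)|\to0$ and the explicit verification of $\psi_2'\to1$ actually fill in details the paper omits---it simply asserts ``Then (\ref{as}) holds'' after writing down $\psi_2$.
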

\begin{proof}
Denote
\[
\alpha=|\sigma|\int_0^\infty x|V(x)|dx.
\]
Since the statement of the lemma does not depend on the choice of the origin on the $x$-axis, without loss of the generality, we can assume that $\alpha<1$.

We look for $\psi_1$ in the form $\psi_1=1+z(x)$. Then $z''+\sigma V(x)z=-\sigma V(x)$, and this equation can be reduced to the integral equation
\begin{equation}\label{eq1}
z(x)+\sigma Pz=-\sigma\int_x^\infty (\xi-x)V(\xi) d\xi, \quad x>0, \quad {\rm where}    \quad Pz=\int_x^\infty (\xi-x)V(\xi)z(\xi)d\xi.
\end{equation}
If $z$ is a solution of (\ref{eq1}), then $\psi_1$ satisfies (\ref{ode}). We consider $P$ as an operator in the space $C$ of continuous bounded functions on the semi-axis $R_+$. Then $\|\sigma P\|\leq \alpha<1$, and therefore equation (\ref{eq1}) is uniquely solvable in $C$. Since $Pz$ vanishes at infinity for each $z\in C$, from (\ref{eq1}) it follows that $z$ vanishes at infinity. Differentiation of (\ref{eq1}) in $x$ implies  that $z'$ vanishes at infinity. Thus $\psi_1$ has the desired asymptotic behavior.

Using the solution $\psi=\psi_1$ of (\ref{ode}), one can obtain the general solution of (\ref{ode}) by looking for it in the form $\psi=\psi_1z$. In particular, the second solution can be chosen as
\begin{equation}\label{x0}
\psi_2(x)=\psi_1(x)\int_{x_0}^x \psi_1^{-2}(\xi)d\xi, \quad x\geq x_0,
\end{equation}
where $x_0\geq 0$ is an arbitrary point such that $\psi_1(x)\neq 0$ for $ x\geq x_0$. Such a point exists since $\psi_1(x)\to 1$ as $x\to \infty$. Then (\ref{as}) holds.

\end{proof}

\begin{lemma}
If $\int_0^\infty V(x)dx\geq 0$ and $\sigma>0$ is small enough, then the solution $\psi=\psi_0$ of (\ref{ode}) with the initial data $\psi_0(0)=1,~\psi_0'(0)=0$ vanishes at some point $x_0>0$.
\end{lemma}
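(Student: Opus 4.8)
The plan is to expand $\psi_0$ in the basis $\{\psi_1,\psi_2\}$ furnished by the preceding lemma, writing $\psi_0=a\psi_1+b\psi_2$, and to read off the behavior at infinity. By the asymptotics (\ref{as}), $\psi_0(x)=a+bx+o(x)$ and $\psi_0'(x)\to b$ as $x\to\infty$, so if the coefficient $b$ of the growing solution is negative, then $\psi_0(x)\to-\infty$; since $\psi_0(0)=1>0$, the intermediate value theorem produces a zero $x_0>0$. Thus the whole matter reduces to showing that $b<0$ for all sufficiently small $\sigma>0$.

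To compute $b$ I would use the Wronskian $W(\psi_1,\psi_0)=\psi_1\psi_0'-\psi_1'\psi_0$, which is constant because both functions solve (\ref{ode}) (there is no first-order term). Feeding (\ref{as}) into $W(\psi_1,\psi_2)$ gives $W(\psi_1,\psi_2)=1$, hence $W(\psi_1,\psi_0)=b$; evaluating the same constant at $x=0$ with $\psi_0(0)=1,\ \psi_0'(0)=0$ gives $b=-\psi_1'(0)$. It remains to extract $\psi_1'(0)$ from the construction $\psi_1=1+z$ of the preceding lemma. Differentiating the integral equation (\ref{eq1}) in $x$ (the boundary term drops since $(\xi-x)|_{\xi=x}=0$) yields $z'(x)=\sigma\int_x^\infty V(\xi)\psi_1(\xi)\,d\xi$, so that
\[
b=-\psi_1'(0)=-\sigma\int_0^\infty V(x)\psi_1(x)\,dx.
\]

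If $\int_0^\infty V\,dx>0$, the argument ends quickly: as $\sigma\to+0$ one has $z=O(\sigma)$ in $C$, so $\psi_1\to1$ and $\int_0^\infty V\psi_1\,dx\to\int_0^\infty V\,dx>0$; hence $b<0$ for small $\sigma$. The \emph{main obstacle} is the borderline case $\int_0^\infty V\,dx=0$, where this first-order term vanishes and one must go to second order. Writing $z=-\sigma F+O(\sigma^2)$ with $F(x)=\int_x^\infty(\xi-x)V(\xi)\,d\xi$ (bounded, since $\int_0^\infty x|V(x)|\,dx<\infty$), and using $\int_0^\infty V\,dx=0$, I obtain
\[
b=\sigma^2\,J+o(\sigma^2),\qquad J=\int_0^\infty V(x)F(x)\,dx=\tfrac12\iint_{\mathbb R_+\times\mathbb R_+}|x-y|\,V(x)V(y)\,dx\,dy.
\]

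The crux is then the sign of $J$. Extending $V$ by zero to all of $\mathbb R$ (so that $\int_{\mathbb R}V=0$) and using that the Fourier transform of $|x|$ equals $-2/k^2$ modulo a distribution supported at the origin, the Plancherel identity gives $J=-\frac{1}{2\pi}\int_{\mathbb R}|\widehat V(k)|^2/k^2\,dk<0$ whenever $V\not\equiv0$ — the one-dimensional analogue of the positive quantity $m$ appearing in the proofs of Theorems \ref{t2} and \ref{t3}. Consequently $b<0$ for small $\sigma$ in this case as well, and the intermediate value theorem again yields the desired zero $x_0$. (The degenerate case $V\equiv0$ is excluded, as then $\psi_0\equiv1$.)
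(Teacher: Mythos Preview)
Your argument is correct and follows the same overall strategy as the paper's proof: expand $\psi_0$ in the basis $\{\psi_1,\psi_2\}$ and show that the coefficient of the linearly growing solution is negative for small $\sigma$, so that $\psi_0(x)\to-\infty$ and the intermediate value theorem gives a zero.

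Two differences in execution are worth recording. First, your Wronskian computation $b=W(\psi_1,\psi_0)=-\psi_1'(0)$ is a pleasant shortcut; the paper instead computes the initial data $\psi_2(0)=0,\ \psi_2'(0)=1+\sigma a+O(\sigma^2)$ from the representation $\psi_2=\psi_1\int_0^x\psi_1^{-2}$ and then solves the $2\times2$ linear system for the coefficients $A,B$. Second, in the borderline case $\int_0^\infty V=0$ you identify the second-order coefficient as $J=\tfrac12\iint_{\mathbb R_+^2}|x-y|V(x)V(y)\,dx\,dy$ and appeal to the distributional Fourier transform of $|x|$ to conclude $J<0$. The paper reaches the same conclusion more elementarily: after an integration by parts it expresses the same quantity as $c^2=\int_0^\infty\bigl(\int_\eta^\infty V(\xi)\,d\xi\bigr)^2\,d\eta$, which is manifestly positive (so $\psi_1'(0)=\sigma^2c^2+O(\sigma^3)>0$). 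One can see directly that your $J$ equals $-c^2$ by writing $|x-y|=x+y-2\min(x,y)$, using $\int V=0$ to kill the first two terms, and then $\min(x,y)=\int_0^\infty\mathbb 1_{t<x}\mathbb 1_{t<y}\,dt$. The paper's route here avoids tempered distributions altogether; yours ties in nicely with the Plancherel arguments for $m$ in Theorems~\ref{t2} and~\ref{t3}.
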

\begin{proof}
We need initial data for $\psi_1,\psi_2$ when $\sigma\to 0$ and $V$ is fixed. Since the operator $(I+\sigma P) ^{-1}$ is analytic in $\sigma$ at $\sigma=0$, function $\psi_1$ as an element of $C$ is analytic at $\sigma=0$. Hence
\[
\psi_1=1+z=1-\sigma\int_x^\infty (\xi-x)V(\xi) d\xi+\sigma^2P\int_x^\infty (\xi-x)V(\xi) d\xi+O(\sigma^3).
\]
Equation (\ref{eq1}) implies that the Taylor series of $\psi_1$ in $\sigma$ can be differentiated in $x$. Thus, after differentiation of (\ref{eq1}) in $x$ followed by integration by parts, we get
\[
\psi_1'=\sigma\int_x^\infty V(\xi) d\xi-\sigma^2\int_x^\infty V(\eta)\int_\eta^\infty (\xi-\eta)V(\xi) d\xi d\eta +O(\sigma^3)
\]
\[
=\sigma\int_x^\infty V(\xi) d\xi-\sigma^2 \int_x^\infty V(\xi)d\xi\int_x^\infty (\xi-x)V(\xi) d\xi+\sigma^2\int_x^\infty \left(\int_\eta^\infty V(\xi)d\xi\right)^2d\eta+O(\sigma^3),
\]
and therefore
\[
\psi_1(0)=1-\sigma a+O(\sigma^2),\quad \psi_1'(0)=\sigma b+\sigma^2(-ab+c^2)+O(\sigma^3),
\]
where
\[
a= \int_0^\infty\xi V(\xi)d\xi, \quad b=\int_0^\infty V(\xi)d\xi, \quad c^2=\int_0^\infty \left(\int_\eta^\infty V(\xi)d\xi\right)^2d\eta.
\]
Since function $\psi_1$ as an element of $C$ is analytic in $\sigma$ at $\sigma=0$, we have $\psi_1=1+O(\sigma)\neq 0, ~x\geq 0, ~\sigma\ll 1$. Thus one can choose $x_0=0$ in (\ref{x0}), and therefore 
\[
 \quad \psi_2(0)=0, \quad \psi_2'(0)=1+\sigma a+O(\sigma^2).
\]

We represent $\psi_0$ as $\psi_0=A\psi_1+B\psi_2$. Initial data for these three solutions allow us to find $A$ and $B$:
\[
A=\frac{1}{1-\sigma a+O(\sigma^2)}, \quad B=-\frac{\sigma b+\sigma^2(ab+c^2)+O(\sigma^3)}{1-\sigma^2 a+O(\sigma^3)}.
\]
Since $B<0$ for small $\sigma>0$, we have $\psi(x)\to-\infty$ as $x\to\infty$. This and the condition $\psi_0(0)=1$ complete the proof.

\end{proof}

{\it Proof of Theorem \ref{tl}.} Let $\phi=\psi_0$ for $x\leq x_0,~ \phi=0$ for $x>x_0$. Then, for arbitrarily small $\sigma>0$,
\[
0=<H\phi,\phi>=\|\phi'\|_{L^2(\mathbb R_+)}-\sigma\int_0^\infty V(x)\phi^2(x)dx.
\]
Since the first term on the right is positive, the second one is negative, and therefore the quadratic form is negative when $\sigma$ is replaced by $2\sigma$. Hence $H$ with $\sigma$ replaced by $2\sigma$ has a negative eigenvalue. Since the arguments used in the first paragraph of the proof of Theorem \ref{t2} imply that it is enough to justify the statement of Theorem \ref{tl} only for small $\sigma>0$, the proof is complete.

\qed

\section{Spectrum of a Schr\"{o}dinger operator with $V\in L^1(\mathbb{R}^2)$}
This section is devoted to a construction of an example showing that the essensial spectrum of the Schr\"{o}dinger operator
\begin{equation}\label{ws}
H=-\Delta-V(x), \quad x\in \mathbb{R}^2, \quad V\in L^1(\mathbb{R}^2),
\end{equation} 
may cover the whole spectral axis. 

We need the folowing lemma.
Denote by $v(h,\delta, |x|)$ the potential
\begin{equation}\label{pot}
V=v(h,\delta, |x|)=\left\{\begin{array}{c}
                    h, ~~|x|<\delta, \\
                    0, ~~|x|\geq \delta.
                  \end{array}\right.
\end{equation}
\begin{lemma}\label{ll} There is a constant $C$ such that for
each $\lambda<0$ and $\delta\leq1/\sqrt{|\lambda|}$ there exists $h=h(\delta,\lambda)$ for which
\begin{equation}\label{h}
h\delta^2\leq |\lambda|\delta^2+\frac{C}{\ln(\sqrt{|\lambda|}\delta)^{-1}}
\end{equation}
and operator (\ref{ws}) with the potential (\ref{pot}) has an exponentially decaying at infinity eigenfunction $\psi$ with the eigenvalue $\lambda$.
\end{lemma}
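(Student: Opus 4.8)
The plan is to reduce the eigenvalue problem for this radial square well to a Bessel-function matching condition across $r=\delta$, and then to read off the bound on $h\delta^2$ from the small-argument behaviour of the modified Bessel functions. Write $\lambda=-\kappa^2$ with $\kappa=\sqrt{|\lambda|}$, so that the hypothesis $\delta\leq1/\sqrt{|\lambda|}$ reads $\kappa\delta\in(0,1]$. Looking for a radial, nodeless eigenfunction $\psi(r)$, the equation $H\psi=\lambda\psi$ becomes $-\psi''-r^{-1}\psi'=(h-\kappa^2)\psi$ for $r<\delta$ and $-\psi''-r^{-1}\psi'=-\kappa^2\psi$ for $r>\delta$. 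Setting $\mu=\sqrt{h-\kappa^2}$ (real, since the construction will force $h>\kappa^2$), regularity at the origin selects $\psi=J_0(\mu r)$ on $\{r<\delta\}$, while square-integrability at infinity selects $\psi=B\,K_0(\kappa r)$ on $\{r>\delta\}$, which decays like $e^{-\kappa r}$ because $K_0(z)\sim\sqrt{\pi/2z}\,e^{-z}$. Matching $\psi$ and $\psi'$ at $r=\delta$ and using $J_0'=-J_1,\ K_0'=-K_1$ yields the transcendental equation
\begin{equation}\label{eq:match}
\frac{\mu J_1(\mu\delta)}{J_0(\mu\delta)}=\frac{\kappa K_1(\kappa\delta)}{K_0(\kappa\delta)}.
\end{equation}
Any $\mu>0$ solving (\ref{eq:match}) gives, via $h=\mu^2+\kappa^2$, a genuine $L^2$ eigenfunction with eigenvalue $\lambda$ that decays exponentially.

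Next I would establish the existence of the ground-state root of (\ref{eq:match}). Let $j_{0,1}$ be the first positive zero of $J_0$. On $\mu\in(0,j_{0,1}/\delta)$ both $J_0(\mu\delta)$ and $J_1(\mu\delta)$ are positive, the left-hand side $L(\mu)$ of (\ref{eq:match}) is continuous, vanishes as $\mu\to0^+$, and tends to $+\infty$ as $\mu\delta\to j_{0,1}^-$ (since $J_0\to0^+$ while $J_1>0$). The right-hand side $R=R(\kappa,\delta)$ is a fixed positive constant independent of $\mu$. Hence the intermediate value theorem produces a root $\mu^*\in(0,j_{0,1}/\delta)$, fixing $h=\mu^{*2}+\kappa^2>\kappa^2$.

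It remains to bound $\mu^*$, which is where the real work lies. The function $z\mapsto J_1(z)/(zJ_0(z))$ is continuous and positive on $(0,j_{0,1})$, tends to $1/2$ as $z\to0^+$ and to $+\infty$ as $z\to j_{0,1}^-$, so it admits a positive lower bound $c_0$ there; consequently $L(\mu)\geq c_0\mu^2\delta$ on the whole interval. Evaluating at the root gives $c_0\mu^{*2}\delta\leq R$, that is
\[
\mu^{*2}\delta^2\leq\frac{R\delta}{c_0}=\frac{1}{c_0}\cdot\frac{(\kappa\delta)K_1(\kappa\delta)}{K_0(\kappa\delta)}\leq\frac{1}{c_0K_0(\kappa\delta)},
\]
where I used that $t\mapsto tK_1(t)$ is decreasing, with derivative $-tK_0(t)$ and limit $1$ at $t=0$, so $tK_1(t)\leq1$. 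Finally, since $K_0(t)=\ln(2/t)-\gamma+o(1)$ as $t\to0$ while $K_0$ is positive and continuous on $(0,1]$ and $\ln(1/t)\to0$ as $t\to1$, the ratio $\ln(1/t)/K_0(t)$ is bounded on $(0,1]$; absorbing $c_0$ into the constant, this yields $1/(c_0K_0(\kappa\delta))\leq C/\ln(1/(\kappa\delta))$. Recalling $h\delta^2=\mu^{*2}\delta^2+\kappa^2\delta^2=\mu^{*2}\delta^2+|\lambda|\delta^2$ and $\ln(1/(\kappa\delta))=\ln(\sqrt{|\lambda|}\delta)^{-1}$ gives exactly (\ref{h}). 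Thus the obstacle is not the existence of the bound state but the uniform control, over the full range $\kappa\delta\in(0,1]$, of the three elementary Bessel estimates above; each is routine but must be verified, and the logarithmic gain in (\ref{h}) comes precisely from the $K_0(t)\sim\ln(1/t)$ singularity at $t=0$.
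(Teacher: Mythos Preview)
Your proof is correct and follows the same route as the paper: construct the radial eigenfunction from $J_0$ inside and $K_0$ outside, match logarithmic derivatives at $r=\delta$, invoke the intermediate value theorem on $(0,j_{0,1}/\delta)$, and extract the bound on $(\mu^*\delta)^2$ from the small-argument behaviour of $K_0$. The only (minor) difference is that the paper argues asymptotically, writing the left side as $-\tfrac{\tau^2}{2}(1+O(\tau^2))$ and the right side as $-O(|\ln(\sqrt{|\lambda|}\delta)|^{-1})$ to conclude $\tau_0^2=O(|\ln(\sqrt{|\lambda|}\delta)|^{-1})$ as $\sqrt{|\lambda|}\delta\to0$, whereas you make the estimate uniform over all $\kappa\delta\in(0,1]$ via the global inequalities $J_1(z)/(zJ_0(z))\geq c_0$ on $(0,j_{0,1})$, $tK_1(t)\leq 1$, and the boundedness of $\ln(1/t)/K_0(t)$ on $(0,1]$; this is a slight strengthening of the presentation but not a different idea.
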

\begin{proof} We look for $\psi$ in the form
\begin{equation}\label{psi1}
\psi( |x|)=\left\{\begin{array}{c}
                    \frac{J_0(\sqrt{h+\lambda}|x|)}{J_0(\sqrt{h+\lambda}\delta)}, ~~|x|<\delta, \\
                    \frac{K_0(\sqrt{|\lambda|}|x|)}{K_0(\sqrt{|\lambda|}\delta)}, ~~|x|\geq \delta,
                  \end{array}\right.
\end{equation}
where $h>|\lambda|, ~J_0$ is the Bessel function and $K_0$ is the modified Bessel function (i.e., $K_0$ is proportional to the Hankel function of the complex argument). Then $-\Delta\psi -v\psi=\lambda\psi, ~|x|\neq\delta$. Since $\psi$ is continuous, the continuity of $\nabla\psi$ implies that $\psi$ is an eigenfunction of $H=-\Delta-v(x)$. Hence, $\psi$ is an eigenfunction if  
\begin{equation}\label{be}
\frac{ \tau J'_0(\tau)}{J_0(\tau)}=\frac{\sqrt{|\lambda|}\delta K'_0(\sqrt{|\lambda|}\delta)}{ K_0(\sqrt{|\lambda|}\delta)}, \quad \tau=\sqrt{h+\lambda}\delta.
\end{equation}
Function $K_0$ decays exponentially at infinity, and therefore the same is true for function (\ref{psi1}). Thus, it remains to show that (\ref{be}) has a solution for which (\ref{h}) holds. 

Since $K_0(x)>0$ and $K'_0(x)<0$ for all $x>0$, the right-hand side above is negative when $\sqrt{|\lambda|}\delta>0$. The function on the left changes monotonocally from $0$ to $-\infty$ when $0<\tau<a$, where $a$ is the first positive root of $J_0(\tau)$.
  Hence, equation (\ref{be}) has a continuous solution $\tau=\tau_0(\sqrt{|\lambda|}\delta)\in(0,a),~\sqrt{|\lambda|}\delta>0$. Its behavior when $\sqrt{|\lambda|}\delta\to0$ can be found from the asymptotics of the Bessel functions. The right-hand side  in (\ref{be}) equals $-O(|\ln^{-1}(\sqrt{|\lambda|}\delta)|), ~\sqrt{|\lambda|}\delta\to0$, and the left-hand side is $\frac{-\tau^2}{2}(1+O(\tau^2)),~\tau\to0$.  Hence, $\tau_0^2=O(|\ln^{-1}(\sqrt{|\lambda|}\delta)|), ~\sqrt{|\lambda|}\delta\to0$, and this implies (\ref{h}). 

\end{proof}

\begin{theorem}
For each $\varepsilon>0$, one can find a potential $V=V_\varepsilon\in L^1(\mathbb{R}^2)$ such that $\|V_\varepsilon\|_{L^1}\leq \varepsilon$ and the essential spectrum of operator (\ref{ws}) covers the whole spectral axis.
\end{theorem}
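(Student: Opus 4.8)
The plan is to assemble $V_\varepsilon$ from a sparse sequence of the single-well potentials supplied by Lemma~\ref{ll}, each tuned to a prescribed negative energy and placed on a disjoint disk whose center marches off to infinity. Fix a countable dense subset $\{\lambda_n\}\subset(-\infty,0)$. For each $n$, Lemma~\ref{ll} gives a radius $\delta_n$ and height $h_n$ so that the bump $v_n=v(h_n,\delta_n,|x-x_n|)$ of (\ref{pot}), centered at a point $x_n$ still to be chosen, has an exponentially decaying eigenfunction $\psi_n$ with eigenvalue $\lambda_n$. Since $\|v_n\|_{L^1}=\pi h_n\delta_n^2$ and the right-hand side of (\ref{h}) tends to $0$ as $\delta_n\to0$ for fixed $\lambda_n$, I would choose $\delta_n$ so small that $\|v_n\|_{L^1}\le\varepsilon\,2^{-n}$ and $\delta_n\le 2^{-n}$. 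Setting $V_\varepsilon=\sum_n v_n$ then yields $\|V_\varepsilon\|_{L^1}\le\varepsilon$, and the total area $\sum_n\pi\delta_n^2$ of the disks is finite and small.

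The step I expect to be the genuine obstacle is that binding a state at a \emph{fixed} negative energy on a shrinking disk forces the heights to grow: from (\ref{h}) one has $h_n\delta_n^2\sim C/\ln(1/\delta_n)$, hence $h_n\to\infty$. Thus $V_\varepsilon$ is unbounded above, $H=-\Delta-V_\varepsilon$ is not semibounded, and its self-adjointness is not automatic and must be engineered. Here I would exploit the remaining freedom in the centers: placing the $n$-th bump far enough out that $V_\varepsilon(x)\le 1+|x|^2$ everywhere. Since only finitely many disks meet any compact set, $V_\varepsilon\in L^\infty_{\mathrm{loc}}\subset L^2_{\mathrm{loc}}$, so a standard essential self-adjointness criterion for $-\Delta+W$ with $W\ge-c(1+|x|^2)$ (Faris--Lavine) makes $H$ essentially self-adjoint on $C_0^\infty(\mathbb R^2)$, which is what lets one speak of $Sp_{ess}(H)$ at all.

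To show $(-\infty,0]\subseteq Sp_{ess}(H)$ I would build singular Weyl sequences. Let $\tilde\psi_n(x)=\psi_n(x-x_n)/\|\psi_n\|_2$ be the normalized translated single-well eigenfunctions; since $(-\Delta-v_n)\tilde\psi_n=\lambda_n\tilde\psi_n$, we get $(H-\lambda_n)\tilde\psi_n=-\sum_{m\ne n}v_m\tilde\psi_n$, an error supported on the other disks, where $\tilde\psi_n$ is exponentially small because $\psi_n$ decays like $K_0(\sqrt{|\lambda_n|}\,|x-x_n|)$. Choosing the centers inductively so far apart that each cross term satisfies $h_m\|\tilde\psi_n\|_{L^2(B(x_m,\delta_m))}\le 2^{-\max(m,n)}$ — feasible since at each step only finitely many separation conditions are imposed and the exponential decay beats the finitely many fixed prefactors — gives $\|(H-\lambda_n)\tilde\psi_n\|\le n\,2^{-n}\to0$. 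For any $\mu<0$ I would pick a subsequence $\lambda_{n_k}\to\mu$, so that $\|(H-\mu)\tilde\psi_{n_k}\|\le\|(H-\lambda_{n_k})\tilde\psi_{n_k}\|+|\lambda_{n_k}-\mu|\to0$, while $\tilde\psi_{n_k}\rightharpoonup0$ because the mass escapes to infinity. Weyl's criterion then yields $\mu\in Sp_{ess}(H)$, and closedness of $Sp_{ess}$ gives $(-\infty,0]\subseteq Sp_{ess}(H)$.

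Finally, for $\mu\ge0$ I would use the standard truncated plane waves $\phi_R(x)=c_R\,\chi\big((x-y_R)/R\big)e^{ik\cdot x}$ with $|k|^2=\mu$, supported in a ball of radius $\sim R$ on which $V_\varepsilon$ vanishes; such bump-free balls of radius $R\to\infty$ exist arbitrarily far out precisely because the total disk area is finite. There $(H-\mu)\phi_R=(-\Delta-|k|^2)\phi_R$ is $O(1/R)$ in $L^2$ and $\phi_R\rightharpoonup0$, so $\mu\in Sp_{ess}(H)$. Combining the two ranges gives $Sp_{ess}(H)=\mathbb R$ while $\|V_\varepsilon\|_{L^1}\le\varepsilon$, which proves the theorem. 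The two places demanding real care are the self-adjointness of this unbounded-below operator, resolved by the far placement together with the Faris--Lavine criterion, and the quantitative exponential-tail bookkeeping that certifies the approximate eigenfunctions as genuine Weyl sequences.
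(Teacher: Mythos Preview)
Your strategy is the same as the paper's: place the wells of Lemma~\ref{ll} sparsely, tuned to a dense set of negative energies, and run Weyl's criterion. The main technical difference is that the paper \emph{truncates} each eigenfunction $\psi_n$ by a smooth cutoff at radius $R_n$, obtaining compactly supported approximate eigenfunctions $\phi_n$; it then chooses the centers so that the balls $B(x_n,R_n)$ are pairwise disjoint. On the support of $\phi_n$ the full potential $V$ coincides with the single well $v_n$, so $(H-\lambda_n)\phi_n=(-\Delta-v_n-\lambda_n)\phi_n$ exactly, and the only error is the truncation error, controlled by choosing $R_n$ large. This removes the cross-term bookkeeping you carry out and makes orthogonality of the Weyl sequence automatic. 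Your inductive placement with exponential-tail estimates works too, and your explicit treatment of essential self-adjointness via Faris--Lavine is a point the paper passes over in silence.

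One genuine slip: your justification that bump-free balls of arbitrarily large radius exist ``precisely because the total disk area is finite'' is false as stated---disks of summable area centered at every lattice point give a counterexample. What actually makes it true in your construction is that the centers $x_n$ form a single sequence marching to infinity with bounded radii $\delta_n\le1$, so the complement of $\bigcup_n B(x_n,\delta_n)$ contains annular regions of arbitrarily large width; you should say this (or simply impose an extra growth condition like $|x_n|\ge 4^n$ when placing the centers) rather than invoke finite area.
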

\begin{proof} We fix an arbitrary sequence $\{\lambda_n\}$ of positive numbers that is dense on $(0,\infty)$. We fix a sequence $\{\delta_n>0\},~n=1,2...$, for which $h_n$ defined in Lemma \ref{ll} with $\lambda=\lambda_n$  has an estimate $h_n\delta_n^2<n^{-2}$.
Let $V_n=v(h_n,\delta_n,|x|)$ be the potential (\ref{pot}), and let $\psi_n$ be the eigenfunction with the eigenvalue $\lambda=\lambda_n$ constructed in Lemma \ref{ll} when $h=h_n,\delta=\delta_n$.
 The potential $-V$ will be a sum of sparse, narrow and deep potential wells:
\begin{equation}\label{ve}
-V(x)=-\sum_{n=n_0}^\infty v(h_n,\delta_n,|x-x_n|).
\end{equation}

We choose $n_0$ for which $\sum_{n\geq n_0}n^{-2}<\varepsilon$. Then $\|V\|_{L^1}<\varepsilon$. We fix a function $\chi=\chi(\tau)\in C^\infty(\mathbb R)$ such that $\chi(\tau)=1$ for $\tau\leq-1,~\chi(\tau)=0$ for $\tau\geq0,$ and define
\[
\phi_n(|x|)=\frac{\psi_n(|x|)}{\|\psi_n\|_{L^2(\mathbb R^2)}}\chi(|x|-R_n)
\]
with $R_n$ increasing so fast that
\begin{equation}\label{pr}
\|\phi_n\|_{L^2(\mathbb R^2)}\to 1 \quad {\rm as}~~n\to\infty, \quad \|(-\Delta-v(x)-\lambda_n)\phi_n\|_{L^2(\mathbb R^2)}\to 0 \quad {\rm as}~~n\to\infty.
\end{equation}
Such a sequence $\{R_n\}$ exists since functions $\psi_n$ decay exponentially at infinity.
Then we choose points $x_n$ in (\ref{ve}) in such a way that circles $B_n$ of radius $R_n$ centered at $x=x_n$ do not intersect each other.

We fix an arbitrary $\lambda_0\leq0$ and an arbitrary subsequence $\phi_{n_j}$ of functions $\phi_{n}$ for which $\lambda_{n_j}\to\lambda_0$
as $j\to\infty$. Functions $\phi_{n_j}(|x-x_{n_j}|), j=1,2...,$ are orthogonal since their supports belong to different balls $B_{n_j} $. Relations (\ref{pr}) imply that $\|\phi_{n_j}\|_{L^2(\mathbb R^2)}\to 1$ as $j\to\infty$ and
\[
\|(-\Delta-V(x)-\lambda_0)\phi_{n_j}(|x-x_{n_j}|)\|_{L^2}=\|(-\Delta-v(h_{n_j},\delta_{n_j},|x-x_{n_j}|)-\lambda_{n_j})\phi_{n_j}(|x-x_{n_j}|)\|_{L^2}
\]
\[
+(\lambda_{n_j}-\lambda_0)\|\phi_{n_j}(|x-x_{n_j}|)\|_{L^2}\to 0 \quad {\rm as} ~~j\to\infty.
\]
Thus the Weyl criterion implies that $\lambda_0$ belongs to the essential spectrum. The point $\lambda_0\in(-\infty,0]$ is arbitrary, and therefore the essential spectrum contains the semi-axis $(-\infty,0]$.

A Weyl sequence for $\lambda>0$ can be constructed in a standard way using truncated plane waves whose support does not intersect the support of $V$.
 
\end{proof}

\section{Examples concerning transient operators}
Let
\begin{equation}\label{h3a}
H\psi=-\Delta\psi-\sigma V(x)\psi, \quad x\in \mathbb{R}^d, ~~d\geq 3.
\end{equation}
Condition (\ref{clr}) is a standard requirement for CLR-type theorems that provide an estimate from above on the number $N(\sigma V)$ of negative eigenvalues of $H$ and for
the quasi-classical asymptotics of $N(\sigma V)$.
 Below we provide a couple of examples concerning the situation when (\ref{clr}) does not hold.

{\bf Example 1 (Large separated spots).} We consider operator (\ref{h3a}), where $V(x)$ is an arbitrary potential such that $V(x)\geq\frac{1}{|x|^2}$ on a sequence of disjoint balls with fast-increasing radii. Then $N(\sigma V)$ can be infinite for large enough $\sigma$.

Let us provide a particular example of this situation. We fix arbitrary points $x_n\in \mathbb{R}^d, ~d\geq3,$ such that $|x_n|=2^n$, choose $R_n=
\frac{1}{3}2^n$, and consider the balls $B_n=\{x: |x-x_n|<R_n\}$.
In order to show that the number of negative eigenvalues is infinite, consider the test functions
$\psi_n(x)=\psi(\frac{|x-x_n|}{R_n})$, where $\psi=\psi(x)$ is a fixed infinitely smooth function with the support in the unit ball $|x|<1$.
Then
\[
<\Delta\psi_n,~\psi_n>=\int_{\mathbb{R}^d}\Delta\psi_n(x)\psi_n(x)dx=R_n^{d-2}\int_{\mathbb{R}^d}\Delta\psi(x)\psi(x)dx=C_1R_n^{d-2},
\]
\[
<V\psi_n,~\psi_n>=\int_{\mathbb{R}^d}V(x)|\psi_n(x)|^2dx\geq R_n^{d-2}\int_{\mathbb{R}^d}|x|^{-2}|\psi(x)|^2dx=C_2R_n^{d-2},
\]
and therefore $<H\psi_n,~\psi_n>$ is negative for all $n\geq 1$ if $\sigma>|C_1|/C_2$. Then the variational principle implies that $N(\sigma V)=\infty$.

{\bf Example 2 (Small sparse spots).} Here we will provide an example of a non-negative potential $V(x)$ for which operator (\ref{h3a}) with $\sigma\in(0,1)$ does not have negative eigenvalues in spite of the fact that $V$ decays at infinity so slowly that
\begin{equation}\label{beta}
\int_{\mathbb{R}^d}V^\beta(x)dx=\infty   \quad {\rm for}~{\rm each} ~~~\beta>0.
\end{equation}
The potential $V$ will be a sum of sparse bumps.

We choose points $x_n$ as in the previous example (i.e., $|x_n|=2^n$). Let $V_0\in C_0^\infty$ (an elementary bump) have the support in the unit ball $|x|<1$. We choose
\begin{equation}\label{sp}
V(x)=\sum _{n_0}^\infty \frac{1}{\ln n}V_0(x-x_n),
\end{equation}
where $n_0\geq3$ will be chosen later. Since $V(x)\to0, ~|x|\to\infty,$ the negative spectrum of $H$ is discrete. Hence, if the negative spectrum is not empty, there is a ground state $\psi_0(x)>0$ that corresponds to
the smallest negative eigenvalue $\lambda_0<0$. Obviously, (\ref{beta}) holds for potential (\ref{sp}), and it remains to show that the negative spectrum of $H$ is empty for $\sigma\in(0,1)$.

If $\psi_0(x)$ exists, then it satisfies the equation
\begin{equation}\label{rel}
\psi_0(x)=\int_{\mathbb{R}^d} G_{\lambda_0}(x-y)\sigma V(y-x_n)\psi_0(y)dy=\sum _{n_0}^\infty \int_{|y-x_n|<1}G_{\lambda_0}(x-y)\frac{\sigma}{\ln n}V_0(y-x_n)\psi_0(y)dy,
\end{equation}
where $G_{\lambda_0}(x-y)$ is the integral kernel of the resolvent $(\Delta-\lambda_0)^{-1}$. Denote
\[
M_n=\max_{x: |x-x_n|\leq 1}\psi_0(x).
\]
From local a priori estimates for solutions of the Laplace equation it follows that $|\psi_0|$ is bounded, and therefore $\{M_n\}\in l^\infty$. Since $0<G_{\lambda_0}(x)<c(d)/|x|^{d-2}, ~x\in \mathbb R^d,$ equation (\ref{rel}) with $\sigma\in (0,1)$ implies that
\[
M_m\leq\frac{1}{\ln n_0}\sum\limits_{n\geq n_0, ~n\neq m}\frac{v(d)c(d)M_n}{|2^m-2^n|^{d-2}-2}+\frac{c_1(d)}{\ln m}M_m
\]
\[
:=\frac{1}{\ln n_0}\sum\limits_{n\geq n_0 ~n\neq m}\Gamma_{n,m}M_n+\gamma_mM_m, \quad  m\geq n_0,
\]
where $v(d)$ is the volume of the unit ball in $\mathbb R^d$. We choose $n_0$ so large that $\gamma_m<1/2$ for $m\geq n_0$. Then
\[
M_m\leq\frac{2}{\ln n_0}\sum\limits_{n\geq n_0 ~n\neq m}\Gamma_{n,m}M_n, \quad  m\geq n_0,
\]
where $\rho:=\max_m\sum _{n\geq 3}\Gamma_{n,m}<\infty$. Hence the norm of the operator with the matrix elements $\Gamma_{n,m},~n,m\geq 3,~n\neq m,$ in the space $l^\infty$ of bounded sequences is bounded, and therefore the system of inequalities for $M_m$ has only trivial solution if $n_0$ is large enough.
Thus the negative spectrum is empty.

\noindent {\bf \large Acknowledgments}:
The work of B. Vainberg was supported by the Simons Foundation grant 527180.

\end{document}